\documentclass[a4paper,12pt]{amsart}
\usepackage{amsmath}
\usepackage{amsthm}
\usepackage{amsfonts}
\usepackage{amssymb,array}
\usepackage{enumerate}
\usepackage{hyperref,graphicx}
\usepackage[all]{xy}

\theoremstyle{definition}
\newtheorem{define}{Definition}[section]
\theoremstyle{plain}
\newtheorem{problem}{Problem}[section]
\theoremstyle{plain}
\newtheorem{assumption}[problem]{Assumption}
\theoremstyle{plain}
\newtheorem{theorem}[define]{Theorem}
\theoremstyle{plain}
\newtheorem{lemma}[define]{Lemma}
\theoremstyle{plain}
\newtheorem{proposition}[define]{Proposition}
\theoremstyle{plain}
\newtheorem{corollary}[define]{Corollary}
\theoremstyle{remark}
\newtheorem{remark}[define]{Remark}
\theoremstyle{definition}

\theoremstyle{definition}

\theoremstyle{definition}

\numberwithin{equation}{section}
\numberwithin{figure}{section}
\numberwithin{table}{section}

\title[Weyl group of semisimple symmetric space]{Weyl group of semisimple symmetric space}
\author[A. Sasaki]{Atsumu SASAKI}

\thanks{This work was supported by JSPS KAKENHI Grant Number JP23K03037. }

\subjclass[2020]{22E46, 22E60, 22F30}
\keywords{semisimple symmetric space; Weyl group; restricted root system; Cartan decomposition}

\address{Department of Mathematics, School of Science, Tokai University, 
4-1-1, Kitakaaname, Hiratsuka, Kanagawa, 259-1292, JAPAN. }
\email{atsumu@tokai.ac.jp}

\date{\today}
\begin{document}

\begin{abstract}
This paper investigates a generalization of the notion of the Weyl group 
of a real reductive Lie group to a semisimple symmetric space $G/H$. 
First, 
we show that the group structure of the Weyl group of $G/H$ is independent of 
the choice of split Cartan subalgebras of $G/H$ 
and the choice of Cartan involutions of $G$ stabilizing $H$. 
After that, 
we can understand all elements of the Weyl group of $G/H$ 
by comparing the Weyl group of some reductive Lie group associated to $G/H$. 
With the aim of extending these results to reductive real spherical homogeneous spaces, 
this work serves as a first step in this direction. 
\end{abstract}

\maketitle

\tableofcontents


\section{Introduction}

Our concern in the present paper is a generalization of the Weyl groups 
of reductive Lie groups to semisimple symmetric spaces. 

Let us begin with our motivation of this study as follows. 
Let $G$ be a non-compact connected real semisimple Lie group 
and $H$ a closed subgroup of $G$ which is a reductive Lie group. 
One can take and fix a Cartan involution $\theta $ of $G$ such that $\theta (H)=H$. 
We set $K:=G^{\theta }$ as the fixed point set of $\theta $ in $G$. 
Then, $K$ is connected and a maximal compact subgroup of $G$. 
Now, 
let us consider the double coset space $K\backslash G/H$. 
This set can be seen from three viewpoints of groups actions 
and these actions could be treated simultaneously by group theoretical method: 
\begin{enumerate}
	\item This is the orbit space of the $(K\times H)$-action on $G$. 
	Here, 
	$K\times H$ acts on $G$ by $(k,h)\cdot x=kxh^{-1}$ ($k\in K,h\in H,x\in G$). 
	\item This is the orbit space of the left action of the compact Lie group $K$ 
	on the homogeneous space $G/H$. 
	\item This is bijective to the orbit space of the left $H$-action 
	on the Riemannian symmetric space $G/K$. 
\end{enumerate}
Concerning these actions, 
let us consider a fundamental problem as follows: 

\begin{problem}
\label{problem:naive}
Find an explicit description of the double coset space $K\backslash G/H$ 
for a reductive homogeneous space $G/H$. 
\end{problem}

In a special case where $H$ coincides with $K$, 
Problem \ref{problem:naive}is nothing but a (classical) Cartan decomposition of $G$. 
The outline of the study for $K\backslash G/H$ is summarized as follows. 
Let us take a maximal split abelian subspace $\mathfrak{a}_0$ for $G$ 
and fix a positive Weyl chamber $\mathfrak{a}_0^+$. 
We denote by $\overline{\mathfrak{a}}_0^+$ the closure of $\mathfrak{a}_0^+$ in $\mathfrak{a}_0$, 
and set $A=\exp \mathfrak{a}_0$ and $\overline{A}_+:=\exp \overline{\mathfrak{a}}_0^+$. 
Then, a Cartan decomposition for $G/K$ means that 
the multiplication map  $K\times \overline{A}_+\times K\to G$ is surjective, 
namely, we obtain a Lie group decomposition $G=K\overline{A}_+K$. 
Moreover, the double coset space is given explicitly by 
\begin{align*}
K \backslash G/K\simeq \overline{A}_+. 
\end{align*}

Next, assume that 
$H$ is non-compact and $G/H$ still has a semisimple (pseudo-Riemannian) symmetric space structure. 
Then, an explicit description of $K\backslash G/H$ is also known as a Cartan decomposition for $G/H$. 
Roughly speaking, 
$K\backslash G/H$ is represented as the closure of a positive Weyl chamber 
of a split Cartan subalgebra for $G/H$. 
See the references \cite[Thm.\,4.1]{fj78} and \cite[Thm.\,10]{ro79}, 
and we will review in Section \ref{subsec:cartan decomposition} on a Cartan decomposition for $G/H$. 

On the other hand, 
when $G/H$ is not a symmetric space, 
it would be difficult to give an answer to Problem \ref{problem:naive}. 
In particular, 
there is no analogue of finding an abelian subspace $\mathfrak{b}_0$ in $\mathfrak{p}_0$ explicitly 
such that the decomposition $G=K(\exp \mathfrak{b}_0)H$ holds 
for a general reductive homogeneous space $G/H$. 

In light of these facts, 
it would be reasonable for the approach to Problem \ref{problem:naive} 
to assume the next situation for a reductive homogeneous space $G/H$. 
Here, 
$\mathfrak{k}_0$ is the Lie algebra of $K$ 
and $\mathfrak{g}_0=\mathfrak{k}_0+\mathfrak{p}_0$ denotes the corresponding Cartan decomposition of 
the Lie algebra $\mathfrak{g}_0$ of $G$. 
For an abelian subspace $\mathfrak{a}_0$ in $\mathfrak{p}_0$, 
we set $A=\exp \mathfrak{a}_0$. 

\begin{assumption}
\label{assump}
A reductive homogeneous space $G/H$ has a Lie group decomposition 
\begin{align}
\label{eq:cd}
G=KAH 
\end{align}
for some abelian subspace $\mathfrak{a}_0$ in $\mathfrak{p}_0$ and $A=\exp \mathfrak{a}_0$. 

We will say that 
the decomposition (\ref{eq:cd}) is a Cartan decomposition for a reductive homogeneous space $G/H$. 
\end{assumption}

If a reductive homogeneous space $G/H$ has an abelian subspace $\mathfrak{a}_0$ in $\mathfrak{p}_0$ 
satisfying Assumption \ref{assump}, then 
the map 
\begin{align}
\label{eq:surjection}
A\to K\backslash G/H,\quad a\mapsto KaH
\end{align}
is a surjection, 
and hence Problem \ref{problem:naive} can be reduced to the following: 

\begin{problem}
\label{problem:cartan decomposition}
Under the setting that $G/H$ satisfies the decomposition (\ref{eq:cd}) 
for some abelian subspace $\mathfrak{a}_0$ in $\mathfrak{p}_0$, 
find a subset $\mathfrak{a}_0'$ in $\mathfrak{a}_0$ 
such that the restriction of the map (\ref{eq:surjection}) 
to $A':=\exp \mathfrak{a}_0'$ is a bijection onto $K\backslash G/H$, namely, $K\backslash G/H\simeq A'$. 
\end{problem}

We are interested in how to find a subset $\mathfrak{a}_0'$ in $\mathfrak{a}_0$ 
such that $K\backslash G/H\simeq \mathfrak{a}_0'$ under the decomposition (\ref{eq:cd}). 
If $G/H$ is a semisimple symmetric space, 
$\mathfrak{a}_0'$ can be taken as the closure $\overline{\mathfrak{a}}_0^+$ 
of the positive Weyl chamber $\mathfrak{a}_0^+$, 
whereas, $\overline{\mathfrak{a}}_0^+$ is also the set of complete representatives of 
the orbit space $W_{K\cap H}(\mathfrak{a}_0)\backslash \mathfrak{a}_0$ 
for the action of the Weyl group $W_{K\cap H}(\mathfrak{a}_0)$ on $A$ 
(see Section \ref{subsec:weyl group associated} for survey). 
Here, $W_{K\cap H}(\mathfrak{a}_0)$ is the quotient group of the normalizer $N_{K\cap H}(\mathfrak{a}_0)$ 
modulo the centralizer $Z_{K\cap H}(\mathfrak{a}_0)$ of $\mathfrak{a}_0$ 
for the adjoint action of $K\cap H$ on $\mathfrak{a}_0$. 
However, 
if we drop off the symmetry of $G/H$, 
then it is difficult to study Problem \ref{problem:naive}. 
Indeed, 
even though $G/H$ has a decomposition (\ref{eq:cd}) for some subspace $\mathfrak{a}_0$ in $\mathfrak{p}_0$, 
we could not expect a bijection between $K\backslash G/H$ and $W_{K\cap H}(\mathfrak{a}_0)$. 

Alternatively, 
let us consider another quotient group 
\begin{align}
\label{eq:def-weyl group}
W_{K\times H}(A):=N_{K\times H}(A)/Z_{K\times H}(A) 
\end{align}
instead of $W_{K\cap H}(\mathfrak{a}_0)$. 
Here, 
$N_{K\times H}(A)$ denotes the normalizer 
and $Z_{K\times H}(A)$ the centralizer of $A=\exp \mathfrak{a}_0$ for the $(K\times H)$-action on $A$. 
Then, one can see that 
$W_{K\times H}(A)$ contains $W_{K\cap H}(\mathfrak{a}_0)$ as a subgroup for general $G/H$, 
and we will see this property in Section \ref{subsec:inclusion} for symmetric $G/H$. 
In this context, 
this seems to be a generalization of the Weyl groups of real reductive Lie groups. 
Henceforth, 
we will say that $W_{K\times H}(A)$ is the {\it Weyl group} of $G/H$ with respect to $K$ and $A$. 

The normalizer $N_{K\times H}(A)$ acts on $A$ as the restriction of the $(K\times H)$-action on $G$, 
and then so does $W_{K\times H}(A)$. 
We write $W_{K\times H}(A)\backslash A$ for the orbit space of the $W_{K\times H}(A)$-action on $A$. 
Then, the map 
\begin{align}
\label{eq:w-surjection}
W_{K\times H}(A)\backslash A\to K\backslash G/H,\quad 
aZ_{K\times H}(A)\mapsto KaH
\end{align}
is still surjective since (\ref{eq:cd}). 
This means that our object $A'$ in Problem \ref{problem:cartan decomposition} 
can be taken as a subset of the representatives of $W_{K\times H}(A)\backslash A$. 

Moreover, 
if one can show that the map (\ref{eq:w-surjection}) is injective, 
then this gives rise to a bijection between $K\backslash G/H$ and $W_{K\times H}(A)\backslash A$. 
Then, one can reduce our problem to find a set of complete representatives of $W_{K\times H}(A)\backslash A$. 
Hence, we raise the next problem: 

\begin{problem}
\label{problem:injective}
Under the setting that $G/H$ satisfies Assumption \ref{assump} 
for some abelian subspace $\mathfrak{a}_0$ in $\mathfrak{p}_0$, 
is the map (\ref{eq:w-surjection}) from $W_{K\times H}(A)\backslash A$ to $K\backslash G/H$ is injective? 
\end{problem}

From now on, 
we will initiate the study of $W_{K\times H}(A)$ given by (\ref{eq:def-weyl group}) 
for a semisimple symmetric space $G/H$. 
The purpose of this paper is to determine $W_{K\times H}(A)$ 
for an arbitrary semisimple symmetric space $G/H$. 
The main result of this paper is explained in Theorem \ref{thm:weyl group}. 

Here is a worth remark that 
the quotient group $W_{K\times H}(A)$ for a compact symmetric triad $(G,K,H)$ has been studied by Matsuki. 
He discovers the structures from the viewpoint of affine Weyl groups. 
Here, a triplet $(G,K,H)$ of compact Lie groups is called a compact symmetric triad 
if both $G/K$ and $G/H$ are compact symmetric spaces. 
See \cite[Thm.\,1]{matsuki97}, \cite{matsuki02} and reference therein. 

This paper is organized as follows. 
First, 
we give a quick review on Cartan decompositions $G=KAH$ for semisimple symmetric spaces $G/H$ 
and an explicit description of the double coset space $K\backslash G/H$ in Section \ref{sec:preliminaries}. 
Next, 
we explain that the group structure of $W_{K\times H}(A)$ defined by (\ref{eq:def-weyl group}) 
for a semisimple symmetric space $G/H$ is determined only by $G/H$ itself in Section \ref{sec:weyl group}. 
More precisely, 
we show that this is independent of both the choice of split Cartan subalgebras of $G/H$ 
and the choice of Cartan involutions of $G$ stabilizing $H$. 
In this context, 
$W_{K\times H}(A)$ could be called the {\it Weyl group of a semisimple symmetric space $G/H$} 
(see Definition \ref{def:weyl group}). 
Finally, we determine the Weyl group $W_{K\times H}(A)$ of a semisimple symmetric space $G/H$. 
For this, 
we characterize the normalizer $N_{K\times H}(A)$, see Theorem \ref{thm:normalizer}. 
Then, our main result (Theorem \ref{thm:weyl group}) is based on Theorem \ref{thm:normalizer}. 
Moreover, 
we give an answer to Problem \ref{problem:injective} for semisimple symmetric spaces, 
see Corollary \ref{cor:problem}. 

{\bf Aknowledgments}. 
The author would like to express his gratitude to Osamu Ikawa and Kurando Baba 
for their interest and valuable comments.


\section{Preliminaries}
\label{sec:preliminaries}

This section gives a brief summary of the facts 
on Cartan decompositions of non-compact semisimple symmetric spaces, 
and futher some preparations for the study of the Weyl groups of real reductive Lie groups. 

\subsection{General setup}
\label{subsec:setting}

We begin with a general setup. 
Let $G$ be a connected non-compact real semisimple Lie group. 
For an automorphism $\nu $ of $G$, 
we write $G^{\nu }:=\{ g\in G:\nu (g)=g\}$ for the fixed point set of $\nu$ in $G$. 
Then, $G^{\nu}$ is a closed subgroup of $G$. 
Let $\sigma$ be an involutive automorphism of $G$ (involution for short) 
and $H$ a closed subgroup of $G$ satisfying $(G^{\sigma })_0\subset H\subset G^{\sigma}$ 
where $(G^{\sigma })_0$ denotes the identity component of $G^{\sigma }$. 
Then, $H$ is reductive and the homogeneous space $G/H$ has a semisimple symmetric space structure. 

It is known that there exists a Cartan involution $\theta $ of $G$ commuting with $\sigma$ (see \cite{be57}). 
We set $K:=G^{\theta }$. 
Then, $K$ is connected and a maximal compact subgroup of $G$. 
As $\sigma \theta =\theta \sigma$, 
the restriction of $\theta $ to the subgroup $H$ is still a Cartan involution of $H$ 
and $H^{\theta }=K\cap H$ is a maximal compact subgroup of $H$. 

If $\sigma$ is conjugate to $\theta $ by the inner automorphism group $\operatorname{Aut}(G)$, 
that is, 
$\sigma$ is also a Cartan involution of $G$, 
then $H$ is isomorphic to $K$, 
namely, 
$H$ is also a maximal compact subgroup of $G$. 
In this case, 
$G/H\simeq G/K$ is a Riemannian symmetric space. 
On the other hand, 
if $\sigma$ is not a Cartan involution of $G$, 
then $H$ is non-compact and hence $G/H$ has a pseudo-Riemannian symmetric space structure. 

Let $\mathfrak{g}_0,\mathfrak{h}_0$ and $\mathfrak{k}_0$ be the Lie algebras of $G,H$ and $K$, respectively. 
For an involution $\nu$ of the Lie group $G$, 
we shall use the same letter $\nu $ 
to denote its differential automorphism on the Lie algebra $\mathfrak{g}_0$. 
Then, $\mathfrak{h}_0$ equals the fixed point set 
$\mathfrak{g}_0^{\sigma}:=\{ X\in \mathfrak{g}_0:\sigma (X)=X\} $ of $\sigma$ in $\mathfrak{g}_0$. 
We set $\mathfrak{q}_0=\mathfrak{g}_0^{-\sigma }=\{ X\in \mathfrak{g}_0:(-\sigma )(X)=X\} $. 
Then, 
$\mathfrak{g}_0=\mathfrak{h}_0+\mathfrak{q}_0$ is the $\sigma$-eigenspace decomposition of $\mathfrak{g}_0$. 
Similarly, $\mathfrak{k}_0$ coincides with $\mathfrak{g}_0^{\theta }$. 
We set $\mathfrak{p}_0=\mathfrak{g}_0^{-\theta }$. 
Then, $\mathfrak{g}_0=\mathfrak{k}_0+\mathfrak{p}_0$ is the corresponding Cartan decomposition 
of $\mathfrak{g}_0$. 

The automorphism $\tau :=\sigma \theta $ is also an involution of $G$. 
Further, $\tau$ is not conjugate to $\sigma$ unless $\sigma$ is a Cartan involution. 
We set $\mathfrak{h}_0^a:=\mathfrak{g}_0^{\tau }$. 
Then, $(\mathfrak{g}_0,\mathfrak{h}_0^a)$ is a semisimple symmetric pair and 
is called the {\it associated symmetric pair} of $(\mathfrak{g}_0,\mathfrak{h}_0)$. 
In this sense, 
we also say that $\mathfrak{h}_0^a$ us an associated Lie algebra for $(\mathfrak{g}_0,\mathfrak{h}_0)$. 
The restriction of $\theta $ to $\mathfrak{h}_0^a$ is still an involution of $\mathfrak{h}_0^a$ 
since $\tau $ commutes with $\theta $. 
Thus, $\mathfrak{h}_0^a$ is decomposed into the $\theta$-eigenspaces as follows: 
\begin{align}
\label{eq:associated algebra}
\mathfrak{h}_0^a
=(\mathfrak{h}_0^a)^{\theta }+(\mathfrak{h}_0^a)^{-\theta }
=\mathfrak{k}_0\cap \mathfrak{h}_0+\mathfrak{p}_0\cap \mathfrak{q}_0. 
\end{align}
In particular, 
$\mathfrak{h}_0^a$ is reductive and 
the semisimple part of $\mathfrak{h}_0^a$, 
which equals the derived ideal $[\mathfrak{h}_0^a,\mathfrak{h}_0^a]$ of $\mathfrak{h}_0^a$, 
is $\theta$-stable. 
Hence, $\theta $ becomes a Cartan involution of $\mathfrak{h}_0^a$, 
and $(\mathfrak{h}_0^a)^{\theta }=\mathfrak{k}_0\cap \mathfrak{h}_0$ 
is a maximal compact subalgebra of $\mathfrak{h}_0^a$. 

Let $H^a$ be a closed subgroup of $G$ with Lie algebra $\mathfrak{h}_0^a$. 
The restriction of $\theta $ to $H^a$ becomes a Cartan involution of $H^a$, 
and then the fixed point set $(H^a)^{\theta }=K\cap H^a$ is a maximal compact subgroup of $H^a$. 
Moreover, 
the Lie algebra of $(H^a)^{\theta }$ equals $\mathfrak{k}_0\cap \mathfrak{h}_0$, 
which coincides with the Lie algebra of $H^{\theta}$. 
Thus, 
we may and do assume that $K\cap H^a$ equals $K\cap H$ by replacing $H^a$ if necessary. 

For convenience, 
we shall say that our choice of $H^a$ is the associated Lie group for $G/H$ 
and also use the notation 
\begin{align*}
K_H:=K\cap H. 
\end{align*}

Let us take a maximal abelian subspace $\mathfrak{a}_0$ in $\mathfrak{p}_0\cap \mathfrak{q}_0$ 
in the following procedure. 
We recall that the subspace $\mathfrak{p}_0\cap \mathfrak{q}_0$ of $\mathfrak{h}_0^a$ 
is expressed as $(\mathfrak{h}_0^a)^{-\theta }$. 
Let $\mathfrak{a}_0^s$ be a maximal abelian subspace 
in $[\mathfrak{h}_0^a,\mathfrak{h}_0^a]^{-\theta }=\mathfrak{p}_0\cap [\mathfrak{h}_0^a,\mathfrak{h}_0^a]$. 
If $\mathfrak{h}_0^a$ is semisimple, 
then the center $\mathfrak{c}(\mathfrak{h}_0^a)$ of $\mathfrak{h}_0^a$ is trivial, 
and then $[\mathfrak{h}_0^a,\mathfrak{h}_0^a]$ coincides with $\mathfrak{h}_0^a$. 
In this case, 
we take $\mathfrak{a}_0$ to be $\mathfrak{a}_0^s$ itself. 
On the other hand, 
if $\mathfrak{h}_0^a$ is not semisimple, 
then the center $\mathfrak{c}(\mathfrak{h}_0^a)$ of $\mathfrak{h}_0^a$ is non-trivial 
and hence 
$\mathfrak{a}_0^s+\mathfrak{c}(\mathfrak{h}_0^a)\cap \mathfrak{p}_0$ 
is a maximal abelian subspace in $\mathfrak{p}_0\cap \mathfrak{q}_0$. 
Thus, we take $\mathfrak{a}_0$ as $\mathfrak{a}_0^s+\mathfrak{c}(\mathfrak{h}_0^a)\cap \mathfrak{p}_0$. 

It is well-known that maximal abelian subspaces in $\mathfrak{p}_0\cap \mathfrak{q}_0$ 
are unique up to conjugation by $K_H$. 
We say that $\mathfrak{a}_0$ is 
a {\it split Cartan subalgebra} of the non-compact semisimple symmetric space $G/H$, 
and the dimension of $\mathfrak{a}_0$ is the {\it split rank} of $G/H$, 
denoted by $\operatorname{rank}_{\mathbb{R}}G/H$. 

\subsection{Group action}
\label{subsec:group action}

In this subsection, 
we set up some group actions which we need in this paper. 

We let the direct product group $G\times G$ act on $G$ by 
\begin{align}
\label{eq:diag-action}
(g_1,g_2)\cdot x:=g_1xg_2^{-1}\quad (g_1,g_2,x\in G). 
\end{align}
Then, 
$K\times H$ acts on $G$ by the restriction of this action 
to the closed subgroup $K\times H$ of $G\times G$. 
Now, let $\Delta$ be the diagonal embedding of $G$ into $G\times G$, namely, 
\begin{align}
\label{eq:delta}
\Delta :G\to G\times G,\quad g\mapsto (g,g). 
\end{align}
Clearly, 
$\Delta $ is a smooth and injective group homomorphism 
and hence $G\simeq \Delta (G)$. 
In this sense, 
we shall regard $G$ as a closed subgroup of $G\times G$. 
Then, the $G$-action on $G$ itself means the restriction of (\ref{eq:diag-action}) to $G$, namely, 
\begin{align}
\label{eq:g-action}
g\cdot x:=\Delta (g)\cdot x=gxg^{-1}\quad (g,x\in G). 
\end{align}
We will use the same letter $\cdot$ to denote the $G$-action on $G$ given by (\ref{eq:g-action}). 

Next, 
we extend the $G$-action on $G$ to the $(G\times G)$-action on $G\times G$, 
namely, we define 
\begin{align*}
(g_1,g_2)*(x_1,x_2):=(g_1\cdot x_1,g_2\cdot x_2)
\quad (g_1,g_2,x_1,x_2\in G). 
\end{align*}
By the restriction of this action to the subgroup $G$, 
let $G$ act on $G\times G$ as follows 
\begin{align}
\label{eq:g-action-diag}
g*(x_1,x_2):=(g\cdot x_1,g\cdot x_2)
\quad (g,x_1,x_2\in G). 
\end{align}

We prepare the next lemma on a relation among the above group actions, 
which we will use in the next section. 

\begin{lemma}
\label{lem:action}
For $g,x_1,x_2,a\in G$, we obtain 
\begin{align*}
(g*(x_1,x_2))\cdot a=g\cdot ((x_1,x_2)\cdot (g^{-1}\cdot a)). 
\end{align*}
\end{lemma}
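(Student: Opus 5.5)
The identity is a direct computation from the definitions (\ref{eq:diag-action}), (\ref{eq:g-action}) and (\ref{eq:g-action-diag}). The plan is to expand both sides into products of group elements in $G$ and check that they coincide. No earlier result beyond the definitions is needed.

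For the left-hand side, (\ref{eq:g-action-diag}) and (\ref{eq:g-action}) give $g*(x_1,x_2)=(gx_1g^{-1},\,gx_2g^{-1})$. Applying (\ref{eq:diag-action}) to $a$ then yields
\begin{align*}
(g*(x_1,x_2))\cdot a=(gx_1g^{-1})\,a\,(gx_2g^{-1})^{-1}=gx_1g^{-1}agx_2^{-1}g^{-1}.
\end{align*}

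For the right-hand side, first $g^{-1}\cdot a=g^{-1}ag$ by (\ref{eq:g-action}). Next, by (\ref{eq:diag-action}),
\begin{align*}
(x_1,x_2)\cdot(g^{-1}ag)=x_1g^{-1}agx_2^{-1}.
\end{align*}
Finally, conjugating by $g$ via (\ref{eq:g-action}) gives $gx_1g^{-1}agx_2^{-1}g^{-1}$.

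The two expressions agree, which proves the lemma. There is no genuine obstacle. The only point needing care is to keep track of the inverse $(gx_2g^{-1})^{-1}=gx_2^{-1}g^{-1}$ and of which of the three actions each dot refers to.

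Conceptually, the identity says $\Delta(g)(x_1,x_2)\Delta(g)^{-1}$ acts as $\Delta(g)\circ(x_1,x_2)\circ\Delta(g)^{-1}$. This holds because (\ref{eq:diag-action}) is a group action of $G\times G$ and $g*(x_1,x_2)$ is conjugation by $\Delta(g)$ in $G\times G$. One could present the proof in this form instead, but the direct expansion is shorter.
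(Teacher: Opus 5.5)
Your proposal is correct and matches the paper's proof: both expand $g*(x_1,x_2)=(gx_1g^{-1},gx_2g^{-1})$, apply it to $a$, and identify $gx_1g^{-1}agx_2^{-1}g^{-1}=g\bigl(x_1(g^{-1}ag)x_2^{-1}\bigr)g^{-1}$ with the right-hand side. Your closing remark is a valid rephrasing of the same fact: $g*(x_1,x_2)$ is conjugation by $\Delta(g)$ in $G\times G$, and (\ref{eq:diag-action}) is a left action.
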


\begin{proof}
By (\ref{eq:g-action-diag}), 
the element $g*(x_1,x_2)=(gx_1g^{-1},gx_2g^{-1})$ lies in $G\times G$. 
Then, the direct computation shows 
\begin{align*}
(g*(x_1,x_2))\cdot a
&=(gx_1g^{-1})a(gx_2g^{-1})^{-1}
=g(x_1(g^{-1}ag)x_2^{-1})g^{-1}. 
\end{align*}
Since $x_1(g^{-1}ag)x_2^{-1}=(x_1,x_2)\cdot (g^{-1}ag)=(x_1,x_2)\cdot (g^{-1}\cdot a)$, 
we obtain $g(x_1(g^{-1}ag)x_2^{-1})g^{-1}=g\cdot ((x_1,x_2)\cdot (g^{-1}\cdot a))$. 
Hence, we have verified the desired formula. 
\end{proof}

\subsection{Weyl group associated to semisimple symmetric space}
\label{subsec:weyl group associated}

Next, 
we review on the Weyl groups of real reductive Lie groups. 
In particular, 
we see the Weyl group of the associated Lie group $H^a$ of a semisimple symmetric space $G/H$. 

Let us retain the notation and setting as in the previous subsections. 
We set $A:=\exp \mathfrak{a}_0$. 
For the $G$-action on $G$ given by (\ref{eq:g-action}), 
we denote by $N_{K_H}(A)$ and $Z_{K_H}(A)$ 
the normalizer and the centralizer of $A$ in the closed subgroup $K_H=K\cap H$ of $G$, respectively. 
Precisely, 
\begin{align*}
N_{K_H}(A)&:=\{ k\in K_H:k\cdot A=A\} ,\\
Z_{K_H}(A)&:=\{ k\in K_H:k\cdot a=a~(\forall a\in A)\} . 
\end{align*}
Then, $N_{K_H}(A)$ is a closed subgroup of $K_H$ and $Z_{K_H}(A)$ is a normal subgroup of $N_{K_H}(A)$. 
We set the quotient group 
\begin{align}
\label{eq:weyl group of H^a}
W_{K_H}(A):=N_{K_H}(A)/Z_{K_H}(A). 
\end{align}

We recall that 
$K_H$ is a maximal compact subgroup of the real reductive Lie group $H^a$ 
with Lie algebra $\mathfrak{k}_0\cap \mathfrak{h}_0$ 
and $\mathfrak{a}_0$ is a maximal abelian subspace 
in $(\mathfrak{h}_0)^{-\theta }=\mathfrak{p}_0\cap \mathfrak{q}_0$. 
Then, 
$W_{K_H}(A)$ is usually called the {\it Weyl group} of $H^a$. 
It is well-known that 
the Lie algebra of $N_{K_H}(A)$ coincides with the Lie algebra of $Z_{K_H}(A)$. 
Thus, it turns out that $W_{K_H}(A)$ is a finite group. 
It is noteworthy to mention that 
the group structure of $W_{K_H}(A)$ is independent 
of both the choice of split Cartan subalgebras of $G/H$ 
and the choice of Cartan involutions of $G$ commuting with $\sigma$ 
(cf. \cite[Cor.\,2.13 in Chap.\,VII]{helgason}). 
In this context, 
we shall call $W_{K_H}(A)$ the {\it Weyl group associated to $G/H$} in this paper. 

Furthermore, 
the group structure of $W_{K_H}(A)$ is known as follows. 
Let $\Sigma \equiv \Sigma (\mathfrak{h}_0^a,\mathfrak{a}_0)$ be the restricted root system 
of $\mathfrak{h}_0^a$ with respect to $\mathfrak{a}_0$. 
The reflection group $W(\Sigma )$ of $\Sigma$ is called the {\it Weyl group} of $\Sigma$. 
Then, we obtain: 

\begin{proposition}[{cf. \cite[Prop.\,7.32]{knapp}, \cite[Prop.\,7.2.1]{sc84}}]
\label{prop:weylgroup-system}
The Weyl group $W_{K_H}(A)$ associated to a semisimple symmetric space $G/H$ 
(see (\ref{eq:weyl group of H^a})) 
coincides with the Weyl group $W(\Sigma )$ of the restricted root system 
$\Sigma \equiv \Sigma (\mathfrak{h}_0^a,\mathfrak{a}_0)$. 
\end{proposition}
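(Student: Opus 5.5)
We reduce the statement to the classical fact for reductive groups: if $L$ is a real reductive Lie group with maximal compact subgroup $K_L$, Cartan decomposition $\mathfrak{l}_0=\mathfrak{k}_{L}+\mathfrak{p}_{L}$, and $\mathfrak{a}_0$ is maximal abelian in $\mathfrak{p}_L$, then $N_{K_L}(\mathfrak{a}_0)/Z_{K_L}(\mathfrak{a}_0)$ is the Weyl group of $\Sigma(\mathfrak{l}_0,\mathfrak{a}_0)$. Take $L=H^a$. By (\ref{eq:associated algebra}), $\theta$ restricts to a Cartan involution of $\mathfrak{h}_0^a$ with $\mathfrak{k}_L=\mathfrak{k}_0\cap\mathfrak{h}_0$ and $\mathfrak{p}_L=\mathfrak{p}_0\cap\mathfrak{q}_0$. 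By construction, $\mathfrak{a}_0$ is maximal abelian in $\mathfrak{p}_L$, and we have arranged $K\cap H^a=K_H$. Since $\exp|_{\mathfrak{p}_0}$ is injective, the conditions $k\cdot A=A$ and $\operatorname{Ad}(k)\mathfrak{a}_0=\mathfrak{a}_0$ are equivalent, and the same holds for centralizers. So $W_{K_H}(A)$ is the Weyl group of $H^a$ in this sense.

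\textbf{Step 1: the action preserves $\Sigma$.} Restriction gives an injective homomorphism $W_{K_H}(A)\to GL(\mathfrak{a}_0)$. Because $\operatorname{Ad}(k)$ for $k\in N_{K_H}(A)$ preserves $\mathfrak{h}_0^a$ and permutes the restricted root spaces, the image preserves $\Sigma$. It also preserves the inner product $B_\theta$.

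\textbf{Step 2: $W(\Sigma)\subset W_{K_H}(A)$.} For $\alpha\in\Sigma$ and $0\neq X\in\mathfrak{g}_\alpha\subset\mathfrak{h}^a$, put $Y=X+\theta X\in\mathfrak{k}_0\cap\mathfrak{h}_0$. Then $k=\exp(tY)$ for a suitable $t$ (the usual $\mathfrak{sl}_2$ computation) normalizes $\mathfrak{a}_0$ and acts as the reflection $s_\alpha$. It fixes $\ker\alpha$ pointwise, since $[Y,\ker\alpha]=0$. The central part $\mathfrak{c}(\mathfrak{h}_0^a)\cap\mathfrak{p}_0$ lies in $\ker\alpha$ for every $\alpha$, so it causes no difficulty.

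\textbf{Step 3: $W_{K_H}(A)\subset W(\Sigma)$.} This is the main obstacle. Take $w$ in $W_{K_H}(A)$, represented by $k$. Since $W(\Sigma)$ acts simply transitively on the Weyl chambers, we may compose $w$ with an element of $W(\Sigma)$ so that $w$ preserves a positive chamber $\mathfrak{a}_0^+$. We must then show $k\in Z_{K_H}(\mathfrak{a}_0)$. Pick a regular $H_0\in\mathfrak{a}_0^+$. Then $\operatorname{Ad}(k)H_0$ and $H_0$ lie in the same chamber, and the $k$-orbit argument applies: the function $f(u)=B(\operatorname{Ad}(u)H_0, H_0)$ on the compact group $(Z_{K_H}(\operatorname{Ad}(k)H_0))_0$ attains a maximum. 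Following Knapp Prop.~7.32 (Helgason VII.2.12), the critical-point analysis produces $z\in Z_{K_H}(\operatorname{Ad}(k)H_0)$ with $\operatorname{Ad}(zk)H_0=H_0$, i.e.\ $zk$ centralizes $H_0$. Since $H_0$ is regular, $Z_{K_H}(H_0)=Z_{K_H}(\mathfrak{a}_0)$ (here one uses $\mathfrak{z}_{\mathfrak{h}^a}(H_0)=\mathfrak{m}_0+\mathfrak{a}_0$). Because $\operatorname{Ad}(k)$ fixes the regular element $H_0$ inside the chamber, $w$ fixes $H_0$. An element of $GL(\mathfrak{a}_0)$ permuting $\Sigma^+$ and fixing a regular element is then shown to lie in the image of $Z_{K_H}(H_0)$, hence to be trivial.

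The subtle point is that $K_H$ may be disconnected. The identity $Z_{K_H}(H_0)=Z_{K_H}(\mathfrak{a}_0)$ has to hold for the whole group, not just its identity component. For this we use that $\operatorname{Ad}$ of any element centralizing $H_0$ preserves $\mathfrak{z}(H_0)\cap\mathfrak{p}_0\cap\mathfrak{q}_0=\mathfrak{a}_0$ and acts on it trivially on the chamber-fixing part. Concretely, $w$ fixes $H_0$ and permutes the simple roots, so it is a diagram symmetry fixing a regular vector, which forces $w=1$. Combining Steps 2 and 3 gives $W_{K_H}(A)=W(\Sigma)$.
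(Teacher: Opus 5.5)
Your reduction to the Weyl group of $H^a$ is the same as the paper's, which gives no argument beyond citing Knapp, Prop.~7.32, for $H^a$. Steps~1 and~2 are fine. The gap is in Step~3, exactly where you flag disconnectedness as the subtle point.

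The ``critical-point'' sentence is circular as written: any $z\in Z_{K_H}(\operatorname{Ad}(k)H_0)$ gives $\operatorname{Ad}(zk)H_0=\operatorname{Ad}(k)H_0$. That is harmless, since a chamber-preserving $w$ does fix some regular element, e.g.\ $H_\rho$. The real problem is that this fact carries no information. Every automorphism of $\Sigma$ preserving $\Sigma^+$ fixes the regular element $H_\rho$; the nontrivial diagram automorphism of $A_2$ is an example. Such a $w$ may in addition act nontrivially on $\mathfrak c(\mathfrak h_0^a)\cap\mathfrak p_0$. So your concluding claim that ``a diagram symmetry fixing a regular vector is trivial'' is false. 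Likewise, $\mathfrak z_{\mathfrak h_0^a}(H_0)=\mathfrak m_0+\mathfrak a_0$ only shows that $Z_{K_H}(H_0)$ normalizes $\mathfrak a_0$ and that its identity component centralizes $\mathfrak a_0$. It does not give $Z_{K_H}(H_0)=Z_{K_H}(\mathfrak a_0)$ when $K_H$ is disconnected.

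No argument can close this step without an extra hypothesis, because the statement fails for some disconnected $H$. Take $G=PSL(2,\mathbb R)$, $\theta(g)={}^tg^{-1}$, $\sigma=\operatorname{Ad}(\operatorname{diag}(1,-1))$ and $H=G^\sigma$. Then $\mathfrak k_0\cap\mathfrak h_0=0$ and $\mathfrak h_0^a=\mathfrak a_0=\mathbb R\bigl(\begin{smallmatrix}0&1\\1&0\end{smallmatrix}\bigr)$, so $\Sigma=\emptyset$ and $W(\Sigma)=1$. But the class of $\bigl(\begin{smallmatrix}0&1\\-1&0\end{smallmatrix}\bigr)$ lies in $K\cap H$ and acts on $\mathfrak a_0$ by $-1$, so $W_{K_H}(A)\cong\mathbb Z/2$.

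The citation of Knapp tacitly supplies the missing hypothesis: $H^a$ is reductive in Knapp's sense, in particular $\operatorname{Ad}(k)|_{\mathfrak h_0^a}\in\operatorname{Int}((\mathfrak h_0^a)_{\mathbb C})$ for $k\in K_H$. This is automatic when $K_H$ is connected, e.g.\ for $H=(G^\sigma)_0$. With it, Step~3 closes as in Knapp:
\begin{enumerate}
\item $\operatorname{Ad}(k)|_{\mathfrak h_0^a}$ commutes with $\theta$, so it lies in the compact connected group $U$ of inner automorphisms preserving $\mathfrak u=(\mathfrak k_0\cap\mathfrak h_0)+i(\mathfrak p_0\cap\mathfrak q_0)$.
\item Since $k$ preserves the chamber, it fixes $H_\rho$, so it centralizes the torus generated by $\operatorname{ad}(iH_\rho)$.
\item The centralizer of a torus in a compact connected group is connected; here its Lie algebra is $\operatorname{ad}(\mathfrak m_0+i\mathfrak a_0)$.
\item Hence $\operatorname{Ad}(k)|_{\mathfrak h_0^a}=e^{\operatorname{ad}(X+iY)}$ with $X\in\mathfrak m_0$ and $Y\in\mathfrak a_0$, which is the identity on $\mathfrak a_0$.
\end{enumerate}
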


We write $W_{K_H}(A)\backslash A$ for the orbit space of the $W_{K_H}(A)$-action on $A$. 
A set of complete representatives of $W_{K_H}(A)\backslash A$ can be expressed as follows. 
We recall that 
a split Cartan subalgebra $\mathfrak{a}_0$ of $G/H$ is chosen as 
$\mathfrak{a}_0=\mathfrak{a}_0^s+\mathfrak{c}(\mathfrak{h}_0^a)\cap \mathfrak{p}_0$ 
(see Section \ref{subsec:setting}). 
Let us fix a positive system $\Sigma ^+$ of $\Sigma$. 
We take the positive Weyl chamber 
$(\mathfrak{a}_0^s)^+:=\{ X\in \mathfrak{a}_0^s:\lambda (X)>0~(\forall \lambda \in \Sigma ^+)\} $ 
of $\mathfrak{a}_0^s$ related to $\Sigma ^+$ and set 
$\mathfrak{a}_0^+:=(\mathfrak{a}_0^s)^++\mathfrak{c}(\mathfrak{h}_0^a)\cap \mathfrak{p}_0$. 
We denote by $\overline{\mathfrak{a}}_0^+$ the closure of $\mathfrak{a}_0^+$ in $\mathfrak{a}_0$ 
and set $\overline{A}_+:=\exp \overline{\mathfrak{a}}_0^+$. 
Then, we have: 

\begin{proposition}[{cf. \cite[Thm.\,2.22 in Chap.\,VII]{helgason}}]
\label{prop:weyl group action}
$W_{K_H}(A)\backslash A\simeq \overline{A}_+$. 
\end{proposition}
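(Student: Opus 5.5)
We need to prove that $W_{K_H}(A)\backslash A\simeq \overline{A}_+$, i.e.\ that every $N_{K_H}(A)$-orbit in $A$ meets $\overline{A}_+$ in exactly one point. The plan is to reduce everything to the Lie algebra and then to the semisimple part $\mathfrak{a}_0^s$.

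\emph{Step 1: pass to the Lie algebra.} Since $\mathfrak{a}_0\subset\mathfrak{p}_0$, the map $\exp:\mathfrak{a}_0\to A$ is a diffeomorphism. It intertwines the action of $N_{K_H}(A)$ by $\operatorname{Ad}$ on $\mathfrak{a}_0$ with conjugation on $A$, because $k\exp(X)k^{-1}=\exp(\operatorname{Ad}(k)X)$. Hence it suffices to show that $\overline{\mathfrak{a}}_0^+$ is a set of complete representatives for $W_{K_H}(A)\backslash\mathfrak{a}_0$. Here we use that the normalizer of $A$ coincides with the normalizer of $\mathfrak{a}_0$ in $K_H$.

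\emph{Step 2: split off the center.} Write $\mathfrak{a}_0=\mathfrak{a}_0^s\oplus(\mathfrak{c}(\mathfrak{h}_0^a)\cap\mathfrak{p}_0)$. Since $K_H\subset H^a$ acts trivially on the center $\mathfrak{c}(\mathfrak{h}_0^a)$ through $\operatorname{Ad}$, the group $W_{K_H}(A)$ fixes the second summand pointwise. It also preserves $\mathfrak{a}_0^s=\mathfrak{a}_0\cap[\mathfrak{h}_0^a,\mathfrak{h}_0^a]$, because the derived ideal is $\operatorname{Ad}(K_H)$-stable. Given the definition $\mathfrak{a}_0^+=(\mathfrak{a}_0^s)^++\mathfrak{c}(\mathfrak{h}_0^a)\cap\mathfrak{p}_0$, the claim reduces to showing that $\overline{(\mathfrak{a}_0^s)^+}$ is a fundamental domain for the action of $W_{K_H}(A)$ on $\mathfrak{a}_0^s$.

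\emph{Step 3: the reflection group.} By Proposition \ref{prop:weylgroup-system}, $W_{K_H}(A)=W(\Sigma)$, the reflection group of $\Sigma(\mathfrak{h}_0^a,\mathfrak{a}_0)$. The roots vanish on the central part, so $\Sigma$ restricts to a root system on $\mathfrak{a}_0^s$ (possibly non-reduced). We then invoke the standard facts about the chamber structure of a finite reflection group.

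Existence: given $X\in\mathfrak{a}_0^s$, pick $w\in W(\Sigma)$ maximizing $\langle wX,\rho\rangle$, where $\rho$ is half the sum of the positive roots. If $\lambda(wX)<0$ for some simple root $\lambda$, then $\langle s_\lambda wX,\rho\rangle>\langle wX,\rho\rangle$, which contradicts maximality. Hence $wX\in\overline{(\mathfrak{a}_0^s)^+}$.

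Uniqueness: suppose $X,wX\in\overline{(\mathfrak{a}_0^s)^+}$. Induct on the length of $w$. If $\ell(w)>0$, choose a simple root $\lambda$ with $w^{-1}\lambda<0$. Then $0\le\lambda(wX)=(w^{-1}\lambda)(X)\le 0$, so $s_\lambda$ fixes $wX$, and we may replace $w$ by $s_\lambda w$, which is shorter. This is Chevalley's lemma.

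The main obstacle is the bookkeeping in Steps 1--2. One must check that normalizing $A$ is the same as normalizing $\mathfrak{a}_0$, that $\operatorname{Ad}(K_H)$ acts trivially on the center of $\mathfrak{h}_0^a$ (this holds since $K_H=K\cap H^a$ lies in $H^a$), and that Proposition \ref{prop:weylgroup-system} applies with $\mathfrak{a}_0$ including the central part. I would handle these first. Once they are settled, the rest is the classical fundamental-domain statement for Weyl chambers.
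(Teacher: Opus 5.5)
Your route is essentially the paper's. The paper gives no argument beyond citing the classical fact \cite[Thm.\,2.22 in Chap.\,VII]{helgason} that a closed Weyl chamber is a fundamental domain for the Weyl group, applied to the associated group $H^a$ via Proposition~\ref{prop:weylgroup-system}. You reduce to exactly this and then write out the classical proof ($\rho$-maximization for existence, Chevalley's lemma for uniqueness). That part is fine, and so is Step~1: injectivity of $\exp$ on $\mathfrak{p}_0$ together with $\operatorname{Ad}(K)\mathfrak{p}_0=\mathfrak{p}_0$ gives $N_{K_H}(A)=N_{K_H}(\mathfrak{a}_0)$ and $Z_{K_H}(A)=Z_{K_H}(\mathfrak{a}_0)$.

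The justification you give in Step~2 is false, however. From $K_H\subset H^a$ you only get that $\operatorname{Ad}(K_H)$ \emph{preserves} $\mathfrak{c}(\mathfrak{h}_0^a)$. It fixes the center pointwise when $K_H$ is connected, but $K_H$ need not be connected. Take $G=PSL(2,\mathbb{R})$, $\theta(g)={}^tg^{-1}$, $\sigma=\operatorname{Ad}(\operatorname{diag}(1,-1))$ and $H=G^{\sigma}$.
\begin{itemize}
\item Since $\mathfrak{k}_0\cap\mathfrak{h}_0=0$, we get $\mathfrak{h}_0^a=\mathfrak{a}_0=\mathbb{R}X$ with $X=E_{12}+E_{21}$. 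This is abelian, so $\Sigma=\emptyset$ and $\overline{A}_+=A$.
\item On the other hand $K_H=\{e,\bar w\}$, where $\bar w$ is the image of $w=E_{12}-E_{21}$. It lies in $G^{\sigma}$ because $\sigma(w)=-w$ in $SL(2,\mathbb{R})$.
\item Since $\operatorname{Ad}(w)X=-X$, $\operatorname{Ad}(K_H)$ acts by $-1$ on the center.
\end{itemize}
Hence $W_{K_H}(A)\cong\mathbb{Z}/2\mathbb{Z}\neq W(\Sigma)$. The natural map $\overline{A}_+\to W_{K_H}(A)\backslash A$ identifies $a$ with $a^{-1}$, so the statement itself fails in this example.

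The repair is to derive the triviality on the center from Proposition~\ref{prop:weylgroup-system}, not from $K_H\subset H^a$. Once $W_{K_H}(A)=W(\Sigma)$ as groups of transformations of $\mathfrak{a}_0$, each generator $s_\lambda$ fixes $\mathfrak{c}(\mathfrak{h}_0^a)\cap\mathfrak{p}_0$ pointwise, because $\lambda$ vanishes there. With that change your proof is complete within the paper's framework.

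Be aware, though, that the real content sits in Proposition~\ref{prop:weylgroup-system}, not in routine bookkeeping. That proposition tacitly needs a hypothesis such as $H$ connected, or $\operatorname{Ad}(K_H)$ acting by inner automorphisms of $\mathfrak{h}_0^a\otimes\mathbb{C}$ as in Knapp's notion of a reductive group \cite{knapp}.
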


\subsection{Cartan decomposition for semisimple symmetric space}
\label{subsec:cartan decomposition}

In this subsection, 
we summarize a Cartan decomposition for a semisimple symmetric space. 
Now, 
we set $Z_{K_H}(a):=\{ m\in K_H:m\cdot a=a\} $ for $a\in A$. 

%
%

\begin{theorem}[{cf. \cite[Thm.\,4.1]{fj78} and \cite[Prop.\,7.1.3]{sc84}}]
\label{thm:cartan decomposition g/h}
Let $G/H$ be a semisimple symmetric space of a connected non-compact real semisimple Lie group $G$. 
\begin{enumerate}
	\item The multiplication map $\Phi :K\times \overline{A}_+\times H\to G$, $(k,a,h)\mapsto kah$ 
	is surjective. 
	Thus, we get the decomposition of $G$ as follows 
	\begin{align}
	\label{eq:cartan decomposition g/h}
	G=K\overline{A}_+H. 
	\end{align}
	\item For $k\in K$, $a\in \overline{A}_+$ and $h\in H$, 
	the inverse image $\Phi ^{-1}(\{ kah\} )$ is given by 
	$\Phi ^{-1}(\{ kah\} )=\{ (km^{-1},a,mh):m\in Z_{K_H}(a)\} $. 
	Therefore, the map $\overline{A}_+\to K\backslash G/H$, $a\mapsto KaH$ is a bijection, namely, 
	\begin{align}
	\label{eq:double coset a+}
	K\backslash G/H\simeq \overline{A}_+. 
	\end{align}
\end{enumerate}
\end{theorem}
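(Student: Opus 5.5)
Part (1) reduces to the Riemannian Cartan decomposition of the associated group together with a polar decomposition of $G$ adapted to $\sigma$. First, I would use the Cartan decomposition $G=K\exp(\mathfrak{p}_0)$. The decomposition $\mathfrak{p}_0=(\mathfrak{p}_0\cap\mathfrak{h}_0)+(\mathfrak{p}_0\cap\mathfrak{q}_0)$ is a symmetric decomposition for the involution $\sigma$ restricted to the Riemannian symmetric space $G/K$. From it, the standard Mostow-type polar decomposition gives
\begin{align*}
G=K\exp(\mathfrak{p}_0\cap\mathfrak{q}_0)\exp(\mathfrak{p}_0\cap\mathfrak{h}_0).
\end{align*}
Concretely, $\exp(\mathfrak{p}_0\cap\mathfrak{h}_0)\subset H$. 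Moreover, the map $K\backslash G\cong \exp\mathfrak{p}_0$ becomes, after passing to the totally geodesic submanifold $\exp(\mathfrak{p}_0\cap\mathfrak{h}_0)\cdot o$ and using orthogonal projection onto it, the statement $G=K\exp(\mathfrak{p}_0\cap\mathfrak{q}_0)H$. To prove this, I would take $x\in G$ and minimize the distance in $G/K$ from $x^{-1}K$ to the orbit $H\cdot o$; equivalently, minimize $h\mapsto d(o,hx\cdot o)$. The minimum exists since $H\cdot o$ is closed and totally geodesic. The first-order condition at the minimizer forces the geodesic from $o$ to be orthogonal to $\mathfrak{p}_0\cap\mathfrak{h}_0$, i.e.\ to lie in $\mathfrak{p}_0\cap\mathfrak{q}_0$.

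Next, $\mathfrak{p}_0\cap\mathfrak{q}_0=(\mathfrak{h}_0^a)^{-\theta}$, and $K_H$ is the maximal compact subgroup of $H^a$ by our normalization. So the Riemannian Cartan decomposition of the reductive group $H^a$ gives
\begin{align*}
\mathfrak{p}_0\cap\mathfrak{q}_0=\operatorname{Ad}(K_H)\overline{\mathfrak{a}}_0^+ .
\end{align*}
This uses the $K_H$-conjugacy of maximal abelian subspaces together with Proposition \ref{prop:weyl group action} and Proposition \ref{prop:weylgroup-system}. Since $K_H\subset K\cap H$, we get $k\exp(\operatorname{Ad}(m)X)h=(km)\exp(X)(m^{-1}h)$, which yields $G=K\overline{A}_+H$.

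For (2), suppose $kah=k'a'h'$ with $a,a'\in\overline{A}_+$. Then $a'=k''ah''$ with $k''\in K$ and $h''\in H$. I would apply the map $\varphi(g)=g\,\sigma\theta(g)^{-1}$, that is $g\tau(g)^{-1}$. It kills $K\cap H$ on the appropriate side; more usefully, set $\psi(g)=\theta(g)^{-1}g$, which is left $K$-invariant, and then the $\sigma$-twisted combination
\begin{align*}
\psi(g)\,\sigma(\psi(g))^{-1}.
\end{align*}
A cleaner route is this: since $\theta(a)=\sigma(a)=a^{-1}$ for $a\in A$, the element $a'^{\,2}$-type invariants can be extracted. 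Define $\Psi(g)=\sigma(g)^{-1}\cdots$; I would set up the map $G\to G$, $g\mapsto \theta(g)^{-1}g$, which is constant on $K$-cosets. Its image is conjugated by $H$ via $h\mapsto h^{-1}(\cdot)\sigma\theta(h)\ldots$. The essential point is uniqueness up to $H$-conjugation of $a^2\in\exp(\mathfrak{p}_0\cap\mathfrak{q}_0)$ under the action of $K_H$. This reduces to uniqueness in the polar decomposition $G=K\exp(\mathfrak{p}_0\cap\mathfrak{q}_0)\exp(\mathfrak{p}_0\cap\mathfrak{h}_0)$, where the middle factor is unique up to $K_H$-conjugacy. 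That in turn follows from uniqueness of the nearest point projection in the nonpositively curved space $G/K$ onto the convex submanifold $H\cdot o$.

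Given that $a$ and $a'$ are $K_H$-conjugate in $\overline{A}_+$, Proposition \ref{prop:weyl group action} forces $a=a'$. The fibre description $\{(km^{-1},a,mh):m\in Z_{K_H}(a)\}$ then follows by tracing the stabilizer: if $kah=k'ah'$, the uniqueness above gives $h'h^{-1}\in K_H$ centralizing $a$.

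\textbf{Main obstacle.} I expect the main obstacle to be the uniqueness statement in (2), namely that the $\mathfrak{p}_0\cap\mathfrak{q}_0$-component is determined up to $K_H$-conjugacy. I would first try the geometric convexity argument in $G/K$: strict convexity of the distance function along the totally geodesic $H\cdot o$ gives a unique foot point, and $K_H$ is exactly the stabilizer ambiguity.
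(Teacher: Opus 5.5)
Your proposal is correct and follows the paper's route. You use a Mostow-type decomposition $G=K\exp(\mathfrak{p}_0\cap\mathfrak{q}_0)H$ whose middle factor is unique up to $\operatorname{Ad}(K_H)$, justified by nearest-point projection onto the closed totally geodesic submanifold $H\cdot o$ of the Hadamard manifold $G/K$ (the paper instead cites Mostow and Loos), followed by the Cartan decomposition $K_H\backslash H^a/K_H\simeq\overline{A}_+$ of the associated group.

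When writing up (2), drop the abandoned $\varphi,\psi,\Psi$ and ``$a^2$'' attempts. Also note that going from $K_H$-conjugacy of $a,a'\in\overline{A}_+$ to $a=a'$ needs the uniqueness half of that Cartan decomposition, namely that $\operatorname{Ad}(K_H)$-conjugate elements of $\mathfrak{a}_0$ are already $N_{K_H}(A)$-conjugate. Proposition~\ref{prop:weyl group action} alone does not give this.
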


Indeed, a (classical) Cartan decomposition of $G$ 
seems to be a special case $H=K$ of Theorem \ref{thm:cartan decomposition g/h}, 
and then we can say that 
a Cartan decomposition of $G$ is a Cartan decomposition 
for the Riemannian semisimple symmetric space $G/K$. 
In this case, 
$\mathfrak{a}_0$ is taken to be a maximal abelian subspace in $\mathfrak{p}_0$. 

Theorem \ref{thm:cartan decomposition g/h} indicates that 
an arbitrary semisimple symmetric space $G/H$ of non-compact type satisfies Assumption \ref{assump} 
and the answer to Problem \ref{problem:naive} is given by (\ref{eq:double coset a+}). 

Let us see the idea of the proof of Theorem \ref{thm:cartan decomposition g/h} 
according to \cite[Thm.\,4.1]{fj78}. 
The decomposition (\ref{eq:cartan decomposition g/h}) is obtained 
by two decomposition theorems; Mostow decomposition and a (classical) Cartan decomposition of $H^a$. 
Here, Mostow decomposition means that 
the multiplication map 
$K\times \exp (\mathfrak{p}_0\cap \mathfrak{q}_0)\times \exp (\mathfrak{p}_0\cap \mathfrak{h}_0)\to G$ 
is a diffeomorphism \cite{mostow,loos}, 
and a (classical) Cartan decomposition of $H^a$ says $K_H\backslash H^a/K_H\simeq \overline{A}_+$. 
The proof of (\ref{eq:double coset a+}) uses the above two bijections 
and the diffeomorphism $K_H\times \exp (\mathfrak{p}_0\cap \mathfrak{h}_0)\simeq H^a$ 
(this decomposition is also called a Cartan decomposition of $H^a$).

\begin{remark}
Rossmann states in \cite[Thm.\,10]{ro79} 
that the double coset space $K\backslash G/H$ is in bijection with $W_{K_H}(A)\backslash A$. 
\end{remark}

\begin{remark}
If $a\in \overline{A}_+$ is regular, namely, $a\in A_+=\exp \mathfrak{a}_0^+$, 
then the centralizer $Z_{K_H}(a)$ of $a$ in $K_H$ coincides with $Z_{K_H}(A)$ 
(cf. \cite[p.\,117]{sc84}). 
\end{remark}


\section{Weyl group of semisimple symmetric space}
\label{sec:weyl group}

This section will introduce the notion of the Weyl group of a semisimple symmetric space 
and study its group structure. 

Let us recall our setup. 
Let $G$ be a connected non-compact real semisimple Lie group and $\sigma$ be an involution of $G$. 
We take a Cartan involution $\theta $ of $G$ commuting with $\sigma$. 
We set $K=G^{\theta }$ and take a closed subgroup $H$ of $G$ 
such that $(G^{\sigma })_0\subset H\subset G^{\sigma }$. 
We fix a split Cartan subalgebra $\mathfrak{a}_0$ of the semisimple symmetric space $G/H$ 
(see Section \ref{subsec:setting}) 
and set $A=\exp \mathfrak{a}_0$. 

Let $N_{K\times H}(A)$ be the normalizer of $A$ in $K\times H$ 
and $Z_{K\times H}(A)$ the centralizer of $A$ in $K\times H$ 
for the $(K\times H)$-action on $G$ given by (\ref{eq:diag-action}). 
Explicitly, 
\begin{align*}
N_{K\times H}(A)&:=\{ (k,h)\in K\times H:(k,h)\cdot A=A\} ,\\
Z_{K\times H}(A)&:=\{ (k,h)\in K\times H:(k,h)\cdot a=a~(\forall a\in A)\} . 
\end{align*}
Then, 
$N_{K\times H}(A)$ is a closed subgroup of $K\times H$ 
and $Z_{K\times H}(A)$ is a normal subgroup of $N_{K\times H}(A)$. 
The aim of this section is to show that 
the group structures of $N_{K\times H}(A)$ and $Z_{K\times H}(A)$ are invariant 
under the choice of split Cartan subalgebras of $G/H$ 
and the choice of Cartan involutions of $G$ commuting with $\sigma$. 
Then, 
the group structure of the quotient group $N_{K\times H}(A)/Z_{K\times H}(A)$ is determined by $G/H$, 
and hence we introduce the Weyl group of $G/H$ by this quotient group 
(see Definition \ref{def:weyl group}). 

\subsection{Choice of split Cartan subalgebra}
\label{subsec:abelian}

First, 
let us see that 
the group structure of $N_{K\times H}(A)$ (resp. $Z_{K\times H}(A)$) 
is independent of the choice of split Cartan subalgebras of $G/H$. 
For this, 
we fix a Cartan involution $\theta $ of $G$, 
and then fix a maximal compact subgroup $K=G^{\theta }$ of $G$ in this subsection. 

Let us take two split Cartan subalgebras $\mathfrak{a}_0,\mathfrak{b}_0$ of $G/H$. 
Since both $\mathfrak{a}_0$ and $\mathfrak{b}_0$ are maximal abelian subspaces 
in $\mathfrak{p}_0\cap \mathfrak{q}_0$, 
the subspace $\mathfrak{b}_0$ is conjugate to $\mathfrak{a}_0$ by $K_H:=K\cap H$. 
Now, we take $\ell \in K_H$ such as $\mathfrak{b}_0=\operatorname{Ad}(\ell )\mathfrak{a}_0$. 
We set $B=\exp \mathfrak{b}_0$. 
Then, $B$ is conjugate to $A=\exp \mathfrak{a}_0$ by $K_H$ because 
\begin{align}
\label{eq:B}
B=\exp (\operatorname{Ad}(\ell )\mathfrak{a}_0)
=\ell A\ell ^{-1}
=\ell \cdot A. 
\end{align}
Under this setting, 
we get the following relation. 
Here, we recall that 
the notation $*$ means the $G$-action on $G\times G$ given by (\ref{eq:g-action-diag}). 

\begin{proposition}
\label{prop:independent-a}
For $\ell \in K_H$ with (\ref{eq:B}), 
we get $N_{K\times H}(B)=\ell *N_{K\times H}(A)$ and $Z_{K\times H}(B)=\ell *Z_{K\times H}(A)$. 
\end{proposition}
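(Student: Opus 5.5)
The plan is to reduce everything to Lemma \ref{lem:action} together with the fact that $\ell\in K_H=K\cap H$. Because $\ell$ lies in both $K$ and $H$, conjugation by $\ell$ preserves $K$ and $H$. Hence the map $(x_1,x_2)\mapsto \ell*(x_1,x_2)=(\ell x_1\ell^{-1},\ell x_2\ell^{-1})$ is a group automorphism of $K\times H$, with inverse $\ell^{-1}*(\cdot)$. So it suffices to prove one inclusion for each equality and then obtain the reverse inclusion by symmetry. The symmetry comes from the relation $A=\ell^{-1}\cdot B$, which follows from (\ref{eq:B}), with the roles of $A$ and $B$ interchanged.

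For the normalizer, take $(x_1,x_2)\in N_{K\times H}(A)$ and an arbitrary element $b\in B$. By (\ref{eq:B}) we can write $b=\ell\cdot a$ for some $a\in A$, so that $\ell^{-1}\cdot b=a$. Lemma \ref{lem:action} then gives
\begin{align*}
(\ell*(x_1,x_2))\cdot b=\ell\cdot\bigl((x_1,x_2)\cdot a\bigr)\in \ell\cdot A=B.
\end{align*}
Since $\ell*(x_1,x_2)\in K\times H$, this shows $(\ell*(x_1,x_2))\cdot B\subset B$.

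Equality is needed here, not just inclusion, because $N_{K\times H}(A)$ is defined by $(k,h)\cdot A=A$. To get it, note that $(x_1,x_2)\cdot A=A$, so every $a'\in A$ has the form $(x_1,x_2)\cdot a$ for some $a\in A$. Running the same computation with such an $a$ shows that every $\ell\cdot a'$ is attained. Therefore $(\ell*(x_1,x_2))\cdot B=\ell\cdot\bigl((x_1,x_2)\cdot A\bigr)=\ell\cdot A=B$, which gives $\ell*N_{K\times H}(A)\subset N_{K\times H}(B)$.

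For the reverse inclusion, apply the same argument with $\ell^{-1}\in K_H$ and with $A,B$ swapped. This yields $\ell^{-1}*N_{K\times H}(B)\subset N_{K\times H}(A)$, and acting by $\ell*$ gives the equality, since $*$ is a group action.

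The centralizer is handled the same way. If $(x_1,x_2)\cdot a=a$ for all $a\in A$, then Lemma \ref{lem:action} gives
\begin{align*}
(\ell*(x_1,x_2))\cdot(\ell\cdot a)=\ell\cdot\bigl((x_1,x_2)\cdot a\bigr)=\ell\cdot a.
\end{align*}
So $\ell*(x_1,x_2)$ fixes every element of $B=\ell\cdot A$. The reverse inclusion again follows by the $\ell^{-1}$ symmetry.

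None of these steps is a real obstacle. The only points needing care are checking that $\ell*(\cdot)$ preserves $K\times H$, which is exactly where $\ell\in K\cap H$ is used, and obtaining equality rather than inclusion in the normalizer condition.
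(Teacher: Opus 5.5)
Your proposal is correct and follows essentially the paper's route. Both arguments apply Lemma \ref{lem:action} to the conjugations $\ell*(\cdot)$ and $\ell^{-1}*(\cdot)$, prove one inclusion directly and get the other by swapping the roles of $A$ and $B$; you also spell out two points the paper leaves implicit, namely that $\ell\in K\cap H$ makes $\ell*(\cdot)$ preserve $K\times H$ and that the normalizer condition holds with equality rather than mere inclusion.
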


\begin{proof}
Let $(k,h)$ be an element of $N_{K\times H}(B)$. 
Using (\ref{eq:B}), 
the relation $(k,h)\cdot B=B$ means $(k,h)\cdot (\ell \cdot A)=\ell \cdot A$. 
Then, we have 
\begin{align*}
\ell ^{-1}\cdot ((k,h)\cdot (\ell \cdot A))
=\ell ^{-1}\cdot (\ell \cdot A)=A. 
\end{align*}
Here, 
it follows from Lemma \ref{lem:action} that 
$\ell ^{-1}\cdot ((k,h)\cdot (\ell \cdot A))$ equals $(\ell ^{-1}*(k,h))\cdot A$. 
Thus, the element $\ell ^{-1}*(k,h)$ lies in $N_{K\times H}(A)$. 
Hence, $(k,h)$ is an element of $\ell *N_{K\times H}(A)$, 
and then 
$N_{K\times H}(B)$ is contained in $\ell *N_{K\times H}(A)$. 

By the same argument, 
$N_{K\times H}(A)\subset \ell ^{-1}*N_{K\times H}(B)$ is obtained. 
Hence, we conclude 
$N_{K\times H}(B)=\ell *N_{K\times H}(A)$. 

The relation $Z_{K\times H}(B)=\ell *Z_{K\times H}(A)$ follows by a similar argument to the above one. 
Thus, we omit the proof. 
\end{proof}

\subsection{Choice of Cartan involution}
\label{subsec:cartan involution}

Next, we explain that 
the group structure of $N_{K\times H}(A)$ (resp. $Z_{K\times H}(A)$) is independent 
of the choice of Cartan involutions of $G$ commuting with $\sigma$. 
The result of this subsection will be stated in Proposition \ref{prop:independent-cartan}. 

Let us take two Cartan involutions $\theta $, $\theta '$ of $G$ commuting with $\sigma$. 
We still use the same letters $\theta $ and $\theta '$ 
to denote the differential automorphisms of $\theta $ and $\theta '$, respectively. 
It is known from \cite[p.\,153]{loos} 
that there exists $y\in \exp \mathfrak{h}_0$ such that 
\begin{align}
\label{eq:cartan involution}
\theta '=\operatorname{Ad}(y)\circ \theta \circ \operatorname{Ad}(y^{-1})
\end{align}
as an automorphism on the Lie algebra $\mathfrak{g}_0$. 
We set $\mathfrak{k}_0:=\mathfrak{g}_0^{\theta }$, $\mathfrak{p}_0:=\mathfrak{g}_0^{-\theta }$ 
and $\mathfrak{k}_0':=\mathfrak{g}_0^{\theta '}$, $\mathfrak{p}_0':=\mathfrak{g}_0^{-\theta '}$. 

\begin{lemma}
\label{lem:k}
$\mathfrak{k}_0'=\operatorname{Ad}(y)\mathfrak{k}_0$ 
and $\mathfrak{p}_0'=\operatorname{Ad}(y)\mathfrak{p}_0$. 
\end{lemma}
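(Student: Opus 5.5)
The plan is to argue directly from (\ref{eq:cartan involution}), using only that $\operatorname{Ad}(y)$ and $\operatorname{Ad}(y^{-1})$ are mutually inverse linear automorphisms of $\mathfrak{g}_0$. I will prove the two inclusions $\operatorname{Ad}(y)\mathfrak{k}_0\subset \mathfrak{k}_0'$ and $\operatorname{Ad}(y)\mathfrak{p}_0\subset \mathfrak{p}_0'$, and then upgrade them to equalities.

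For the first inclusion, take $X\in \mathfrak{k}_0$, so $\theta (X)=X$. By (\ref{eq:cartan involution}),
\begin{align*}
\theta '(\operatorname{Ad}(y)X)=\operatorname{Ad}(y)\,\theta \bigl(\operatorname{Ad}(y^{-1})\operatorname{Ad}(y)X\bigr)=\operatorname{Ad}(y)\theta (X)=\operatorname{Ad}(y)X ,
\end{align*}
hence $\operatorname{Ad}(y)X\in \mathfrak{k}_0'$. The same computation with $\theta (X)=-X$ shows that $X\in \mathfrak{p}_0$ gives $\operatorname{Ad}(y)X\in \mathfrak{p}_0'$.

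For the reverse inclusions, I would take $X'\in \mathfrak{k}_0'$ and set $X:=\operatorname{Ad}(y^{-1})X'$. Rewriting (\ref{eq:cartan involution}) as $\theta =\operatorname{Ad}(y^{-1})\circ \theta '\circ \operatorname{Ad}(y)$ gives $\theta (X)=\operatorname{Ad}(y^{-1})\theta '(X')=X$. So $X\in \mathfrak{k}_0$ and $X'=\operatorname{Ad}(y)X$. The same step handles $\mathfrak{p}_0'$.

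Alternatively, one could note that $\mathfrak{g}_0=\mathfrak{k}_0\oplus \mathfrak{p}_0=\mathfrak{k}_0'\oplus \mathfrak{p}_0'$. Since $\operatorname{Ad}(y)$ is bijective, the two inclusions then force equality, for instance by a dimension count. There is no real obstacle here. The only point to watch is that (\ref{eq:cartan involution}) is an identity of automorphisms of $\mathfrak{g}_0$, so the whole argument stays at the Lie algebra level, and the fact that $y\in \exp \mathfrak{h}_0$ is not needed for this lemma.
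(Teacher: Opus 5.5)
Your proposal is correct and follows essentially the paper's argument: both unwind $\theta'=\operatorname{Ad}(y)\circ\theta\circ\operatorname{Ad}(y^{-1})$ on eigenvectors and prove the two inclusions separately. The paper writes out only $\mathfrak{k}_0'\subset\operatorname{Ad}(y)\mathfrak{k}_0$ and dismisses the reverse inclusion and the $\mathfrak{p}_0$ case as analogous, whereas you write out both directions.
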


\begin{proof}
Let $X$ be an element of $\mathfrak{k}_0'$. 
As (\ref{eq:cartan involution}), 
the relation $\theta '(X)=X$ implies $\theta (\operatorname{Ad}(y^{-1})X)=\operatorname{Ad}(y^{-1})X$. 
This means that $\operatorname{Ad}(y^{-1})X$ lies in $\mathfrak{k}_0$, 
from which $X\in \operatorname{Ad}(y)\mathfrak{k}_0$. 
Thus, the inclusion $\mathfrak{k}_0'\subset \operatorname{Ad}(y)\mathfrak{k}_0$ is obtained. 
The opposite inclusion also holds by the same argument as above. 
Hence, we get $\mathfrak{k}_0'=\operatorname{Ad}(y)\mathfrak{k}_0$. 

One can also prove the other equality. 
Hence, we omit the proof. 
\end{proof}

The next lemma explains that both $\mathfrak{h}_0=\mathfrak{g}_0^{\sigma }$ 
and $\mathfrak{q}_0=\mathfrak{g}_0^{-\sigma }$ are invariant 
under the automorphism $\operatorname{Ad}(y)$ on $\mathfrak{g}_0$. 

\begin{lemma}
\label{lem:h-q}
$\operatorname{Ad}(y)\mathfrak{h}_0=\mathfrak{h}_0$ and 
$\operatorname{Ad}(y)\mathfrak{q}_0=\mathfrak{q}_0$. 
\end{lemma}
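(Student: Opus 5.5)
The plan is to use only that $y\in \exp \mathfrak{h}_0$, so $y=\exp Y$ for some $Y\in \mathfrak{h}_0$, together with the fact that $\sigma$ is an involutive automorphism of $\mathfrak{g}_0$ whose $(\pm 1)$-eigenspaces are $\mathfrak{h}_0$ and $\mathfrak{q}_0$. The key observation is that $\operatorname{ad}(Y)$ preserves both eigenspaces. Indeed, since $\sigma$ is a Lie algebra automorphism fixing $Y$, we have
\begin{align*}
\sigma([Y,X])=[\sigma(Y),\sigma(X)]=[Y,\sigma(X)]\qquad (X\in \mathfrak{g}_0).
\end{align*}
Hence $[\mathfrak{h}_0,\mathfrak{h}_0]\subset \mathfrak{h}_0$ and $[\mathfrak{h}_0,\mathfrak{q}_0]\subset \mathfrak{q}_0$. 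Equivalently, $\operatorname{ad}(Y)$ commutes with $\sigma$.

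Next I pass from $\operatorname{ad}$ to $\operatorname{Ad}$. We have
\begin{align*}
\operatorname{Ad}(y)=\operatorname{Ad}(\exp Y)=e^{\operatorname{ad}(Y)}=\sum_{n\ge 0}\frac{1}{n!}\operatorname{ad}(Y)^n.
\end{align*}
Each term preserves the closed subspaces $\mathfrak{h}_0$ and $\mathfrak{q}_0$, and these are finite-dimensional. So the convergent series preserves them too, giving $\operatorname{Ad}(y)\mathfrak{h}_0\subset \mathfrak{h}_0$ and $\operatorname{Ad}(y)\mathfrak{q}_0\subset \mathfrak{q}_0$. Since $\operatorname{Ad}(y)$ is injective and the spaces are finite-dimensional, both inclusions are equalities. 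Alternatively, applying the same argument to $y^{-1}=\exp(-Y)\in \exp \mathfrak{h}_0$ gives the reverse inclusions.

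A slicker route is available. Because $\operatorname{ad}(Y)$ commutes with $\sigma$, so does $e^{\operatorname{ad}(Y)}$, giving $\sigma\circ \operatorname{Ad}(y)=\operatorname{Ad}(y)\circ \sigma$. Then $\operatorname{Ad}(y)$ maps each $\sigma$-eigenspace into itself, and invertibility gives equality. This is the version I would write, since it yields both equalities at once.

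There is no real obstacle; the argument is routine. The only point requiring care is that $y$ lies in $\exp\mathfrak{h}_0$ rather than merely in $H$. Even for general $h\in H\subset G^\sigma$ the conclusion would hold, via $\sigma\circ\operatorname{Ad}(h)=\operatorname{Ad}(\sigma(h))\circ\sigma=\operatorname{Ad}(h)\circ\sigma$. Either route works, and I would mention the latter as the conceptual reason.
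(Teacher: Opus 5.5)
Your proof is correct and follows the paper's argument: both rest on $\sigma\circ\operatorname{Ad}(y)=\operatorname{Ad}(y)\circ\sigma$, then eigenspace preservation, with invertibility of $\operatorname{Ad}(y)$ (the paper uses $y^{-1}$) giving equality. The one difference is that you derive the commutation directly from $y=\exp Y$ with $Y\in\mathfrak{h}_0$, or from $\sigma(y)=y$, whereas the paper cites it from \cite[Lem.\,2.3]{bis21}; this makes your version self-contained.
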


\begin{proof}
We know from \cite[Lem.\,2.3]{bis21} that the involution $\sigma$ commutes with $\operatorname{Ad}(y)$, 
namely, 
\begin{align}
\label{eq:commutative}
\sigma \circ \operatorname{Ad}(y)=\operatorname{Ad}(y)\circ \sigma . 
\end{align}

Let $X$ be an element of $\mathfrak{h}_0$. 
It follows from the equality $\sigma (X)=X$ and the relation (\ref{eq:commutative}) that 
we obtain the relation 
\begin{align*}
\sigma (\operatorname{Ad}(y)X)=\operatorname{Ad}(y)(\sigma (X))=\operatorname{Ad}(y)X. 
\end{align*}
Hence, $\operatorname{Ad}(y)\mathfrak{h}_0$ is contained in $\mathfrak{h}_0$. 
Conversely, 
the equality (\ref{eq:commutative}) implies 
$\sigma \circ \operatorname{Ad}(y^{-1})=\operatorname{Ad}(y^{-1})\circ \sigma $. 
Then, one can also show the inclusion $\operatorname{Ad}(y^{-1})\mathfrak{h}_0\subset \mathfrak{h}_0$, 
from which $\mathfrak{h}_0\subset \operatorname{Ad}(y)\mathfrak{h}_0$. 
Hence, we have verified $\operatorname{Ad}(y)\mathfrak{h}_0=\mathfrak{h}_0$. 

The other equality $\operatorname{Ad}(y)\mathfrak{q}_0=\mathfrak{q}_0$ 
can be also verified by the same argument as the first one, from which we omit the proof. 
\end{proof}

Now, let us take a maximal abelian subspace $\mathfrak{a}_0'$ in $\mathfrak{p}_0'\cap \mathfrak{q}_0$. 
Using $y\in \exp \mathfrak{h}_0$ satisfying (\ref{eq:cartan involution}), 
we set 
\begin{align}
\label{eq:a-a'}
\mathfrak{a}_0:=\operatorname{Ad}(y^{-1})\mathfrak{a}_0'. 
\end{align}
Clearly, $\mathfrak{a}_0$ is an abelian subspace 
in $\operatorname{Ad}(y^{-1})(\mathfrak{p}_0'\cap \mathfrak{q}_0)$. 
By Lemmas \ref{lem:k} and \ref{lem:h-q}, 
the subspace $\operatorname{Ad}(y^{-1})(\mathfrak{p}_0'\cap \mathfrak{q}_0)$ in $\mathfrak{g}_0$ 
coincides with $\mathfrak{p}_0\cap \mathfrak{q}_0$, from which 
$\mathfrak{a}_0$ is contained in $\mathfrak{p}_0\cap \mathfrak{q}_0$. 
Moreover, $\mathfrak{a}_0$ is a maximal abelian subspace in $\mathfrak{p}_0\cap \mathfrak{q}_0$ 
since $\dim \mathfrak{a}_0=\dim \mathfrak{a}_0'=\operatorname{rank}_{\mathbb{R}}G/H$. 

We recall that the exponential map from the Lie algebra $\mathfrak{k}_0$ to 
the connected compact Lie group $K=G^{\theta }$ is surjective, 
and hence $K=\exp \mathfrak{k}_0$ is obtained. 
Similarly, 
$K'=\exp \mathfrak{k}_0'$ is also a maximal compact subgroup of $G$. 
It follows from Lemma \ref{lem:k} that 
\begin{align}
\label{eq:k'-k}
K'=\exp (\operatorname{Ad}(y)\mathfrak{k}_0)
=y(\exp \mathfrak{k}_0)y^{-1}
=y\cdot K. 
\end{align}
Clearly, $y\cdot H$ coincides with $H$. 
Hence, $K'\times H$ is written as 
\begin{align}
\label{eq:k'-h}
K'\times H=(y\cdot K)\times (y\cdot H)=y*(K\times H). 
\end{align}

We set $A'=\exp \mathfrak{a}_0'$ and $A=\exp \mathfrak{a}_0$. 
By (\ref{eq:a-a'}), we obtain 
\begin{align}
\label{eq:a'-a}
A'=\exp (\operatorname{Ad}(y)\mathfrak{a}_0)
=y\cdot A. 
\end{align}

We are ready to consider the normalizer $N_{K'\times H}(A')$ of $A'$ in $K'\times H$. 

\begin{proposition}
\label{prop:independent-cartan}
Retain the setting as above. 
Then, $N_{K'\times H}(A')=y*N_{K\times H}(A)$ and $Z_{K'\times H}(A')=y*Z_{K\times H}(A)$. 
\end{proposition}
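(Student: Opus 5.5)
We prove both equalities by transporting everything through $y$, exactly as in the proof of Proposition \ref{prop:independent-a}, with (\ref{eq:k'-h}) and (\ref{eq:a'-a}) playing the role of (\ref{eq:B}). The one new point is that the acting group now changes as well: $K'\times H=y*(K\times H)$ by (\ref{eq:k'-h}). So the first step is to check that the map $(x_1,x_2)\mapsto y*(x_1,x_2)$ is a group isomorphism of $G\times G$. It is conjugation by $(y,y)$ in each factor, and it restricts to a bijection from $K\times H$ onto $K'\times H$. This uses $y\cdot K=K'$ from (\ref{eq:k'-k}), and $y\cdot H=H$.

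For $y\cdot H=H$ I would argue as follows. Lemma \ref{lem:h-q} gives $\operatorname{Ad}(y)\mathfrak{h}_0=\mathfrak{h}_0$. More directly, $y\in\exp\mathfrak{h}_0\subset(G^{\sigma})_0\subset H$, so conjugation by $y$ preserves $H$. This is the step I would flag as needing care, since $H$ may be disconnected. The observation $y\in H$ settles it.

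Next comes the normalizer. Take $(k',h)\in N_{K'\times H}(A')$ and write $(k',h)=y*(k,h_1)$ with $(k,h_1)\in K\times H$. Using (\ref{eq:a'-a}), the condition $(k',h)\cdot A'=A'$ reads $(y*(k,h_1))\cdot(y\cdot A)=y\cdot A$. By Lemma \ref{lem:action}, applied elementwise with $g=y$ and $a$ replaced by $y\cdot a$, the left side equals $y\cdot((k,h_1)\cdot A)$. Acting by $y^{-1}$ then gives $(k,h_1)\cdot A=A$, so $(k,h_1)\in N_{K\times H}(A)$. Hence $N_{K'\times H}(A')\subset y*N_{K\times H}(A)$. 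The same computation read backwards, or the symmetric argument with $y^{-1}$, $\theta'$ and $\theta$ interchanged, gives the reverse inclusion.

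For the centralizer the argument is the same, but done pointwise. For each $a\in A$ we have $(y*(k,h_1))\cdot(y\cdot a)=y\cdot((k,h_1)\cdot a)$. This equals $y\cdot a$ for all $a$ if and only if $(k,h_1)\cdot a=a$ for all $a$. Since $y\cdot A=A'$, this yields $Z_{K'\times H}(A')=y*Z_{K\times H}(A)$. Everything except the identification of $K'\times H$ with $y*(K\times H)$ is a routine repetition of Proposition \ref{prop:independent-a}.
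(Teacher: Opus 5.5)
Your proposal is correct and matches the paper's proof: both transport by $y*$, identify $K'\times H=y*(K\times H)$ via (\ref{eq:k'-h}), and use Lemma \ref{lem:action} together with (\ref{eq:a'-a}), with the reverse inclusion obtained symmetrically. Your justification of $y\cdot H=H$ via $y\in\exp\mathfrak{h}_0\subset(G^{\sigma})_0\subset H$ is the right one. It fills in the step the paper calls ``clear''; Lemma \ref{lem:h-q} alone would only control the identity component of $H$.
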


\begin{proof}
Let us prove the inclusion $N_{K'\times H}(A')\subset y*N_{K\times H}(A)$. 
Let $(k',h)$ be an element of $N_{K'\times H}(A')$. 
We have already seen in (\ref{eq:k'-h}) that $y^{-1}*(k',h)$ lies in $K\times H$. 

Now, let us verify that $(y^{-1}*(k',h))\cdot a$ is an element of $A$ for any $a\in A$. 
By Lemma \ref{lem:action}, 
we obtain 
\begin{align*}
(y^{-1}*(k',h))\cdot a=y^{-1}\cdot ((k',h)\cdot (y\cdot a)), 
\end{align*}
and $y\cdot a$ is an element of $A'$ (see (\ref{eq:a'-a})). 
As $(k',h)\in N_{K'\times H}(A)$, 
the element $(k',h)\cdot (y\cdot a)$ lies in $A'$. 
Hence, we obtain $y^{-1}\cdot ((k',h)\cdot (y\cdot a))\in y^{-1}\cdot A'=A$, 
from which $(y^{-1}*(k',h))\cdot a\in A$ for any $a\in A$. 
As a consequence, 
$y^{-1}*N_{K'\times H}(A)$ is contained in $N_{K\times H}(A)$, 
and hence the inclusion $N_{K'\times H}(A')\subset y*N_{K\times H}(A)$ has been verified. 

Similarly, we obtain $N_{K\times H}(A)\subset y^{-1}*N_{K'\times H}(A')$. 
This implies the opposite inclusion $y*N_{K\times H}(A)\subset N_{K'\times H}(A')$. 
Therefore, $N_{K'\times H}(A')=y*N_{K\times H}(A)$ has been proved. 

By the same way as in the above argument to the normalizers, 
one can show $Z_{K'\times H}(A')=y*Z_{K\times H}(A)$. 
Thus, we omit the proof.
\end{proof}

\subsection{Weyl group of semisimple symmetric space}
\label{subsec:weyl group}

Propositions \ref{prop:independent-a} and \ref{prop:independent-cartan} explain that 
the group structure of the quotient group $N_{K\times H}(A)/Z_{K\times H}(A)$ 
is independent of both the choice of Cartan involutions of $G$ commuting with $\sigma $ 
and the choice of split Cartan subalgebras of $G/H$. 
Therefore, 
the group structure of $N_{K\times H}(A)/Z_{K\times H}(A)$ is determined by only $G/H$. 

\begin{define}
\label{def:weyl group}
We say that the quotient group 
\begin{align}
\label{eq:weyl group}
W(G/H)\equiv W_{K\times H}(A):=N_{K\times H}(A)/Z_{K\times H}(A). 
\end{align}
is the {\it Weyl group} of a semisimple symmetric space $G/H$. 
\end{define}

\section{Characterization of Weyl group of semisimple symmetric space}
\label{sec:main}

We have seen in Section \ref{sec:preliminaries} that 
an arbitrary semisimple symmetric space $G/H$ of a connected non-compact semisimple Lie group $G$ 
has a Cartan decomposition (\ref{eq:cartan decomposition g/h}) 
and the double coset space $K\backslash G/H$ is given by the closure $\overline{\mathfrak{a}}_0^+$ 
of a positive Weyl chamber $\mathfrak{a}_0^+$ of a split Cartan subalgebra $\mathfrak{a}_0$ 
(see Theorem \ref{thm:cartan decomposition g/h}). 
Furthermore, 
$\overline{A}_+=\exp \overline{\mathfrak{a}}_0^+$ is also in bijection with 
the orbit space $W_{K_H}(A)\backslash A$ of the action of the Weyl group $W_{K_H}(A)$ associated to $G/H$ 
on $A=\exp \mathfrak{a}_0$ (see Proposition \ref{prop:weyl group action}). 

On the other hand, 
we have introduced the Weyl group $W_{K\times H}(A)$ of $G/H$ (see Definition \ref{def:weyl group}) 
in connection with Problem \ref{problem:injective} 
and also seen that the group structure of $W_{K\times H}(A)$ is determined simply by $G/H$. 
Since $G/H$ satisfies (\ref{eq:cd}), 
the map (\ref{eq:w-surjection}) from the orbit space $W_{K\times H}(A)\backslash A$ to $K\backslash G/H$ 
is surjective. 

In view of the relations (\ref{eq:summary}), 
we hope that the orbit space $W_{K\times H}(A)\backslash A$ of the $W_{K\times H}(A)$-action on $A$ 
is also in bijection with $\overline{A}_+$. 
For this, 
we will determine the Weyl group $W_{K\times H}(A)$ of $G/H$ 
by comparing it with the Weyl group $W_{K_H}(A)$ associated to $G/H$. 
\begin{align}
\label{eq:summary}
\begin{array}[t]{c@{\!\!}c@{\!\!}c@{\,\,}c@{\,\,}c@{\!\!}c@{\!\!}c@{\!\!}c@{\!\!}c}
W_{K_H}(A)\backslash A & \simeq & \overline{\mathfrak{a}}_0^+ & \simeq & \overline{A}_+ & \simeq & 
	K\backslash G/H & \leftarrow & W_{K\times H}(A)\backslash A. \\[-1mm]
 & \text{\scriptsize Prop.\,\ref{prop:weyl group action}} & 
 & & 
 & \text{\scriptsize Thm.\,\ref{thm:cartan decomposition g/h}} 
 & 
 & \text{\scriptsize surjection}
\end{array}
\end{align}

\subsection{Inclusion map from $W_{K_H}(A)$ to $W_{K\times H}(A)$}
\label{subsec:inclusion}

We begin with the observation of the diagonal embedding $\Delta$ 
from $G$ to $G\times G$ given by (\ref{eq:delta}). 
As the normalizer $N_{K_H}(A)$ of $A$ in $K_H=K\cap H$ is a closed subgroup of $G$, 
the image $\Delta (N_{K_H}(A))$ of $N_{K_H}(A)$ by $\Delta $ is a closed subgroup of $G\times G$. 
Here, we recall the notation $\ell \cdot A=\ell A\ell ^{-1}=(\ell ,\ell )\cdot A$ 
in the sense of (\ref{eq:diag-action}) and (\ref{eq:g-action}). 
Then, we have: 

\begin{lemma}
\label{lem:inclusion}
$\Delta (N_{K_H}(A))\subset N_{K\times H}(A)$ and $\Delta (Z_{K_H}(A))\subset Z_{K\times H}(A)$. 
\end{lemma}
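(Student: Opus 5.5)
This is essentially a direct unwinding of the definitions, and I would carry it out in two parallel steps, one for normalizers and one for centralizers.

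\emph{Normalizers.} Let $\ell\in N_{K_H}(A)$. Since $K_H=K\cap H$, the element $\ell$ lies in both $K$ and $H$, so $\Delta(\ell)=(\ell,\ell)\in K\times H$. Next I compare the two actions. By the definitions (\ref{eq:diag-action}) and (\ref{eq:g-action}),
\begin{align*}
\Delta(\ell)\cdot a=(\ell,\ell)\cdot a=\ell a\ell^{-1}=\ell\cdot a\quad (a\in A),
\end{align*}
which is the identity recalled just before the lemma. Taking the union over $a\in A$ gives $\Delta(\ell)\cdot A=\ell\cdot A=A$, because $\ell$ normalizes $A$ for the conjugation action. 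Hence $\Delta(\ell)\in N_{K\times H}(A)$.

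\emph{Centralizers.} The same computation handles these. If $\ell\in Z_{K_H}(A)$, then $\Delta(\ell)\in K\times H$ as before, and for every $a\in A$ we have $\Delta(\ell)\cdot a=\ell\cdot a=a$. Thus $\Delta(\ell)\in Z_{K\times H}(A)$.

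No step here is a genuine obstacle. The only point needing a moment's care is that $\Delta(\ell)$ lands in $K\times H$ rather than merely in $G\times G$, and this holds precisely because $\ell\in K\cap H$. The rest is the observation that, on the diagonal, the $(G\times G)$-action restricts to conjugation.

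Finally, I would remark that $\Delta$ is an injective homomorphism carrying $Z_{K_H}(A)$ into $Z_{K\times H}(A)$. It therefore induces a homomorphism $W_{K_H}(A)\to W_{K\times H}(A)$. This map is injective: if $\Delta(\ell)\in Z_{K\times H}(A)$, then $\ell a\ell^{-1}=a$ for all $a\in A$, so $\ell\in Z_{K_H}(A)$. This is the inclusion promised in the section title, although it is not part of the statement itself.
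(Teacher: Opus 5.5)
Your proposal is correct and follows essentially the same route as the paper: you use $\ell\in K\cap H$ to place $(\ell,\ell)$ in $K\times H$, then observe that $(\ell,\ell)\cdot a=\ell a\ell^{-1}$, which transfers both the normalizing and the centralizing property. Your closing remark on injectivity of the induced map $W_{K_H}(A)\to W_{K\times H}(A)$ is also correct; it matches the paper's next lemma, which is argued via two representatives rather than the kernel, an equivalent formulation.
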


\begin{proof}
Let $\ell $ be an element of $N_{K_H}(A)$. 
As $\ell \in K_H$, 
the element $\Delta (\ell )=(\ell ,\ell )$ lies in $K_H\times K_H\subset K\times H$. 
Further, the relation 
$\ell \cdot A=A$ implies $(\ell ,\ell )\cdot A=A$, 
from which $\Delta (\ell )$ belongs to $N_{K\times H}(A)$. 
Hence, we have verified $\Delta (N_{K_H}(A))\subset N_{K\times H}(A)$. 

One can show the inclusion $\Delta (Z_{K_H}(A))\subset Z_{K\times H}(A)$ by the same way as above. 
Thus, we omit the proof. 
\end{proof}

Owing to Lemma \ref{lem:inclusion}, 
$N_{K_H}(A)$ and $Z_{K_H}(A)$ are regarded 
as closed subgroups of $N_{K\times H}(A)$ and $Z_{K\times H}(A)$, respectively, 
and 
$\Delta$ induces the group homomorphism $\widetilde{\Delta }$ 
from $W_{K_H}(A)=N_{K_H}(A)/Z_{K_H}(A)$ to $W_{K\times H}(A)=N_{K\times H}(A)/Z_{K\times H}(A)$, 
namely, 
\begin{align}
\label{eq:tilde-delta}
\widetilde{\Delta }:W_{K_H}(A)\to W_{K\times H}(A),\quad 
[\ell ]_{K_H}\mapsto [\Delta (\ell )]_{K\times H}=[\ell ,\ell ]_{K\times H}. 
\end{align}
Here, 
$[\ell ]_{K_H}\in W_{K_H}(A)$ denotes the equivalence class of $\ell \in N_{K_H}(A)$ modulo $Z_{K_H}(A)$ 
and $[k,h]_{K\times H}\in W_{K\times H}(A)$ for the equivalence class of $(k,h)\in N_{K\times H}(A)$ 
modulo $Z_{K\times H}(A)$. 

\begin{lemma}
\label{lem:injection}
The group homomorphism $\widetilde{\Delta }$ given by (\ref{eq:tilde-delta}) is injective. 
\end{lemma}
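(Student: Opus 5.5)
Since $\widetilde{\Delta}$ is already a group homomorphism, it suffices to show that its kernel is trivial. We identify the kernel explicitly. Let $[\ell]_{K_H}\in W_{K_H}(A)$ with $\ell\in N_{K_H}(A)$, and suppose $\widetilde{\Delta}([\ell]_{K_H})$ is the identity of $W_{K\times H}(A)$. Then $\Delta(\ell)=(\ell,\ell)$ lies in $Z_{K\times H}(A)$. By the definition of $Z_{K\times H}(A)$ via the action (\ref{eq:diag-action}), this means
\begin{align*}
(\ell,\ell)\cdot a=\ell a\ell^{-1}=a\quad (\forall a\in A).
\end{align*}
By (\ref{eq:g-action}) the left-hand side is exactly $\ell\cdot a$. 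So $\ell\cdot a=a$ for all $a\in A$, and since $\ell\in K_H$ we get $\ell\in Z_{K_H}(A)$. Hence $[\ell]_{K_H}$ is the identity of $W_{K_H}(A)$, and $\widetilde{\Delta}$ is injective.

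Equivalently, the key identity is
\begin{align*}
\Delta^{-1}(Z_{K\times H}(A))\cap N_{K_H}(A)=Z_{K_H}(A).
\end{align*}
The inclusion $\supset$ is Lemma \ref{lem:inclusion}, and the inclusion $\subset$ is the computation above.

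There is no real obstacle here; the argument only unwinds the definitions. The one point needing care is well-definedness of $\widetilde{\Delta}$, which is already guaranteed by Lemma \ref{lem:inclusion}, since $\Delta$ maps $Z_{K_H}(A)$ into $Z_{K\times H}(A)$. One might also note that $\Delta$ is injective on $G$, but this is not needed: the argument works at the level of cosets, using only that the diagonal action of $(\ell,\ell)$ coincides with conjugation by $\ell$.
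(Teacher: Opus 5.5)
Your proof is correct and is essentially the paper's argument. The paper takes $\ell,\ell'$ with equal images and applies your computation to $(\ell,\ell)^{-1}(\ell',\ell')=\Delta(\ell^{-1}\ell')\in Z_{K\times H}(A)$ to get $\ell^{-1}\ell'\in Z_{K_H}(A)$, which is the same kernel argument phrased for a pair of elements.
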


\begin{proof}
Suppose that two elements $\ell ,\ell '\in N_{K_H}(A)$ satisfy 
$\widetilde{\Delta }([\ell ]_{K_H})=\widetilde{\Delta }([\ell ']_{K_H})$. 
By definition, 
we obtain $[\ell ,\ell ]_{K\times H}=[\ell ',\ell ']_{K\times H}$. 
Now, we set 
\begin{align}
\label{eq:ell-ell'}
(k,h):=(\ell ,\ell )^{-1}(\ell ',\ell '). 
\end{align}
This relation implies $k=\ell ^{-1}\ell '=h$, 
and hence $k$ lies in $K\cap H=K_H$. 
On the other hand, 
$(k,h)$ is an element of $Z_{K\times H}(A)$. 
Then, the relation $(k,h)\cdot a=a$ holds for any $a\in A$. 
Thus, we obtain $k\cdot a=kak^{-1}=kah^{-1}=a$ for any $a\in A$. 
This means that $k$ is an element of $Z_{K_H}(A)$. 
Therefore, we get $[\ell ']_{K_H}=[\ell k]_{K_H}=[\ell ]_{K_H}$. 
\end{proof}

By Lemma \ref{lem:injection}, 
$W_{K_H}(A)$ is regarded as a closed subgroup of $W_{K\times H}(A)$ in the following sense 
\begin{align}
\label{eq:inclusion}
W_{K_H}(A)\simeq \widetilde{\Delta }(W_{K_H}(A))\subset W_{K\times H}(A). 
\end{align}

\subsection{Determination of $N_{K\times H}(A)$}
\label{subsec:key}

In view of the inclusion (\ref{eq:inclusion}), 
we raise a natural question 
whether the opposite inclusion also holds or not. 
To give an answer to this question, 
we again consider the restriction of $\Delta $ to $N_{K_H}(A)$, 
which gives rise to the injective homomorphism to $N_{K\times H}(A)$. 

\begin{theorem}
\label{thm:normalizer}
Retain the setting as in Section \ref{subsec:inclusion}. 
Then, the subgroup $\Delta (N_{K_H}(A))$ coincides with $N_{K\times H}(A)$. 
Hence, we get the group isomorphism $N_{K\times H}(A)\simeq N_{K_H}(A)$. 
Similarly, $Z_{K\times H}(A)\simeq Z_{K_H}(A)$. 
\end{theorem}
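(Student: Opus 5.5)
The plan is to show directly that every $(k,h)\in N_{K\times H}(A)$ satisfies $k=h$. The key observation is that the identity $e$ lies in $A=\exp \mathfrak{a}_0$. Hence, for $(k,h)\in N_{K\times H}(A)$, the element $(k,h)\cdot e=kh^{-1}$ belongs to $A$. So we may write $kh^{-1}=a_0$ with $a_0=\exp X_0$ for some $X_0\in \mathfrak{a}_0\subset \mathfrak{p}_0\cap \mathfrak{q}_0$. The whole proof then reduces to showing $X_0=0$.

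To kill $a_0$, I will apply the two involutions to the relation $k=a_0h$.

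\begin{enumerate}
	\item Applying $\theta$: since $\theta (k)=k$ and $\theta (a_0)=\exp (\theta X_0)=a_0^{-1}$ (because $X_0\in \mathfrak{p}_0$), we get $k=a_0^{-1}\theta (h)$. Comparing with $k=a_0h$ yields
	\begin{align*}
	a_0^{2}=\theta (h)h^{-1}.
	\end{align*}
	\item The right-hand side lies in $G^{\sigma}$, since $h\in H\subset G^{\sigma}$ and $\sigma \theta =\theta \sigma$ give $\sigma (\theta (h))=\theta (h)$. Hence $\sigma (a_0^2)=a_0^2$.
	\item Since $X_0\in \mathfrak{q}_0$, we also have $\sigma (a_0^2)=\exp (-2X_0)=a_0^{-2}$. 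Combining with the previous step gives $\exp (4X_0)=a_0^{4}=e$.
	\item Since $\exp |_{\mathfrak{p}_0}$ is injective (it is a diffeomorphism onto its image, by the Cartan decomposition $G=K\exp \mathfrak{p}_0$), we conclude $X_0=0$. Thus $a_0=e$ and $k=h$.
\end{enumerate}

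This common element lies in $K\cap H=K_H$. The condition $(k,k)\cdot A=A$ is precisely $k\cdot A=A$, so $k\in N_{K_H}(A)$ and $(k,h)=\Delta (k)\in \Delta (N_{K_H}(A))$. Together with Lemma \ref{lem:inclusion}, this gives $\Delta (N_{K_H}(A))=N_{K\times H}(A)$. Since $\Delta$ is an injective homomorphism, we obtain $N_{K\times H}(A)\simeq N_{K_H}(A)$.

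For the centralizers, note $Z_{K\times H}(A)\subset N_{K\times H}(A)=\Delta (N_{K_H}(A))$. So any $(k,h)\in Z_{K\times H}(A)$ has the form $(k,k)$ with $kak^{-1}=a$ for all $a\in A$, that is, $k\in Z_{K_H}(A)$. The reverse inclusion is Lemma \ref{lem:inclusion}, which gives $Z_{K\times H}(A)\simeq Z_{K_H}(A)$.

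The only delicate point is step~4, the injectivity of $\exp$ on $\mathfrak{p}_0$. It is what excludes torsion in $\exp \mathfrak{a}_0$, and it is standard. If one instead tried to use a Cartan decomposition of the possibly disconnected group $H$, more care would be needed. The argument via $\sigma$ in steps~2–3 avoids this entirely.
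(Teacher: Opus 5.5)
Your proof is correct, and it takes a genuinely different route from the paper's. Both arguments start from the same observation: since $e\in A$, the element $a:=kh^{-1}$ lies in $A$. Both also finish the same way, by noting that once $k=h$, this element lies in $K_H$ and normalizes $A$. The difference is how $a=e$ is obtained.

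The paper moves $a$ into $\overline{A}_+$ by some $\ell_+\in N_{K_H}(A)$ (Proposition~\ref{prop:weyl group action}). It observes that $a_+=\ell_+\cdot a$ lies in the double coset $KeH$. It then invokes the uniqueness half of the Cartan decomposition $K\backslash G/H\simeq\overline{A}_+$ (Theorem~\ref{thm:cartan decomposition g/h}). That theorem in turn rests on the Mostow decomposition and the Cartan decomposition of $H^a$.

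You instead use that both $\theta$ and $\sigma$ act on $A$ by inversion. The $\theta$-computation gives $a^2=\theta(h)h^{-1}\in G^{\sigma}$, the $\sigma$-computation then forces $a^4=e$, and injectivity of $\exp$ on $\mathfrak{p}_0$ finishes. That injectivity holds for any connected semisimple $G$ by the global Cartan decomposition, so step~4 is sound.

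Your argument is more elementary and self-contained, and it proves more. It shows $KH\cap\exp(\mathfrak{p}_0\cap\mathfrak{q}_0)=\{e\}$. Hence $N_{K\times H}(A)=\Delta(N_{K_H}(A))$ holds for every abelian subspace $\mathfrak{a}_0\subset\mathfrak{p}_0\cap\mathfrak{q}_0$, with no use of maximality, the Weyl group, or positive chambers. You also obtain the centralizer statement directly from the normalizer one, rather than repeating the argument as the paper indicates.

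What the paper's route offers is that it is phrased purely through the double coset space ($e$ is the unique representative of $KeH$ in $\overline{A}_+$) and never uses $\sigma$. That suits the paper's stated aim of passing to reductive real spherical spaces, where a decomposition $G=KAH$ exists but no involution fixes $H$ and inverts $A$. Your trick depends essentially on the symmetric structure.
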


\begin{proof}
Let us take an element $(k,h)$ of $N_{K\times H}(A)$ and put 
\begin{align}
\label{eq:a-def}
a:=(k,h)\cdot e=keh^{-1}=kh^{-1}\in A. 
\end{align}
Due to Proposition \ref{prop:weyl group action}, 
one can take $\ell _+\in N_{K_H}(A)$ such that $\ell _+\cdot a$ lies in $\overline{A}_+$. 
We write 
\begin{align*}
a_+:=\ell _+\cdot a. 
\end{align*}

It is obvious that $a_+=(e,e)\cdot a_+$ belongs to the $(K\times H)$-orbit in $G$ through $a_+$. 
On the other hand, 
$a_+$ is also expressed as 
\begin{align*}
a_+
=(\ell _+,\ell _+)\cdot ((k,h)\cdot e)
=(\ell _+k,\ell _+h)\cdot e. 
\end{align*}
Since $\ell _+\in K_H$, $k\in K$ and $h\in H$, 
the element $\ell _+k$ lies in $K$ and $\ell _+h$ in $H$. 
Hence, $a_+$ belongs to the $(K\times H)$-orbit in $G$ through $e\in \overline{A}_+$, too. 
Thus, we get 
\begin{align}
\label{eq:orbit-eq}
KeH=Ka_+H. 
\end{align}
Since $K\backslash G/H\simeq \overline{A}_+$ (see Theorem \ref{thm:cartan decomposition g/h}), 
the relation (\ref{eq:orbit-eq}) implies that 
the element $a_+\in \overline{A}_+$ must be equal to $e$, and then $\ell _+\cdot a=e$. 
Hence, we get $a=e$. 
Since the element $a$ is defined by (\ref{eq:a-def}), 
we obtain $k=h$. 

As $(k,h)\in N_{K\times H}(A)$, 
we obtain $A=(k,h)\cdot A=(k,k)\cdot A=k\cdot A$. 
This means that $k$ belongs to $N_{K_H}(A)$. 

Consequently, 
an element $(k,h)$ of $N_{K\times H}(A)$ is expressed as $(k,k)=\Delta (k)\in \Delta (N_{K_H}(A))$. 
This means that 
the injective homomorphism $\Delta :N_{K_H}(A)\to N_{K\times H}(A)$ is surjective, 
and hence $\Delta (N_{K_H}(A))=N_{K\times H}(A)$. 
Therefore, we have proved $N_{K_H}(A)\simeq N_{K\times H}(A)$. 

One can show the group isomorphism $Z_{K\times H}(A)\simeq Z_{K_H}(A)$ 
by the same argument as the proof of $N_{K\times H}(A)\simeq N_{K_H}(A)$. 
Thus, we omit the proof. 
\end{proof}

\subsection{Determination of $W_{K\times H}(A)$}
\label{subsec:main}

We have seen in Lemma \ref{lem:injection} that 
the group homomorphism $\widetilde{\Delta }$ given by (\ref{eq:tilde-delta}) is injective. 
On the other hand, since 
\begin{align*}
N_{K_H}(A)\stackrel{\Delta }{\simeq }N_{K\times H}(A)\to W_{K\times H}(A),\quad 
k\mapsto [\Delta (k)]_{K\times H}
\end{align*}
is surjective, 
so is $\widetilde{\Delta }$. 
Therefore, we conclude: 

\begin{theorem}
\label{thm:weyl group}
Let $G$ be a connected non-compact real semisimple Lie group 
and $H$ a closed subgroup of $G$ such that $G/H$ is a semisimple symmetric space. 
Then, the Weyl group $W_{K\times H}(A)$ of $G/H$ coincides with the image $\widetilde{\Delta }(W_{K_H}(A))$ 
of the Weyl group $W_{K_H}(A)$ associated to $G/H$. 
In particular, we get 
\begin{align*}
W_{K\times H}(A)\simeq W_{K_H}(A). 
\end{align*}
\end{theorem}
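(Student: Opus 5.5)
The plan is to show that the injective homomorphism $\widetilde{\Delta}$ of Lemma \ref{lem:injection} is also surjective. Together with injectivity, this gives $W_{K\times H}(A)=\widetilde{\Delta}(W_{K_H}(A))\simeq W_{K_H}(A)$. Surjectivity will come almost directly from Theorem \ref{thm:normalizer}, so the only real work is checking that the two quotients match.

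First, take an arbitrary class $[k,h]_{K\times H}\in W_{K\times H}(A)$ with $(k,h)\in N_{K\times H}(A)$. By Theorem \ref{thm:normalizer}, $N_{K\times H}(A)=\Delta(N_{K_H}(A))$, so $(k,h)=(\ell,\ell)$ for some $\ell\in N_{K_H}(A)$. Then
\begin{align*}
[k,h]_{K\times H}=[\ell,\ell]_{K\times H}=\widetilde{\Delta}([\ell]_{K_H}).
\end{align*}
Hence the composite $N_{K_H}(A)\to N_{K\times H}(A)\to W_{K\times H}(A)$, $\ell\mapsto[\Delta(\ell)]_{K\times H}$, is surjective. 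Since this composite factors through $W_{K_H}(A)$ via $\widetilde{\Delta}$, the map $\widetilde{\Delta}$ is surjective.

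Second, I will recheck that $\widetilde{\Delta}$ is well defined, i.e.\ that $\Delta(Z_{K_H}(A))\subset Z_{K\times H}(A)$; this is Lemma \ref{lem:inclusion}. I will also confirm that the kernel is trivial, which is Lemma \ref{lem:injection}. As a consistency check, Theorem \ref{thm:normalizer} also gives $Z_{K\times H}(A)=\Delta(Z_{K_H}(A))$, so $\Delta$ induces an isomorphism of the pairs $(N_{K_H}(A),Z_{K_H}(A))\to(N_{K\times H}(A),Z_{K\times H}(A))$, and hence of the quotient groups. Combining these steps gives the stated equality and isomorphism.

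The main obstacle is not in this deduction but in Theorem \ref{thm:normalizer}, which we may assume. It rests on the bijectivity $K\backslash G/H\simeq\overline{A}_+$ of Theorem \ref{thm:cartan decomposition g/h}, used to force $kh^{-1}=e$. The only point needing care here is that the description of the centralizer must use the same identification via $\Delta$ as the normalizer. Given that, the theorem follows formally.
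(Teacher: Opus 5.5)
Your proposal is correct and follows the paper's own argument. Injectivity of $\widetilde{\Delta}$ comes from Lemma~\ref{lem:injection}, and surjectivity holds because the composite $N_{K_H}(A)\to N_{K\times H}(A)\to W_{K\times H}(A)$, $\ell\mapsto[\Delta(\ell)]_{K\times H}$, is onto by Theorem~\ref{thm:normalizer} and factors through $\widetilde{\Delta}$.
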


Theorem \ref{thm:weyl group} also asserts that 
the group structure of $W_{K\times H}(A)$ is the same as that of $W_{K_H}(A)$. 
More precisely, we obtain: 

\begin{corollary}
\label{cor:reflection group}
The Weyl group $W_{K\times H}(A)$ of a semisimple symmetric space $G/H$ 
is isomorphic to the reflection group $W(\Sigma )$ 
of the restricted root system $\Sigma \equiv \Sigma (\mathfrak{h}_0^a,\mathfrak{a}_0)$ 
of the associated Lie algebra $\mathfrak{h}_0^a$ with respect to $\mathfrak{a}_0$ as a group. 
Hence, $W_{K\times H}(A)$ is a finite group. 
\end{corollary}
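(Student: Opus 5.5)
The corollary is obtained by chaining two isomorphisms already available in the paper. Theorem \ref{thm:weyl group} says that $\widetilde{\Delta }$ is a group isomorphism $W_{K_H}(A)\to W_{K\times H}(A)$. Proposition \ref{prop:weylgroup-system} identifies $W_{K_H}(A)$ with the Weyl group $W(\Sigma )$ of the restricted root system $\Sigma =\Sigma (\mathfrak{h}_0^a,\mathfrak{a}_0)$. Composing the inverse of the second identification with $\widetilde{\Delta }$ gives the isomorphism
\begin{align*}
W(\Sigma )\simeq W_{K_H}(A)\stackrel{\widetilde{\Delta }}{\simeq }W_{K\times H}(A).
\end{align*}

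The one point to check is that Proposition \ref{prop:weylgroup-system} really applies in our setting. The group $H^a$ is reductive, and $\theta $ restricts to a Cartan involution of it. Its maximal compact subgroup is $K_H=K\cap H^a$, by the normalization fixed in Section \ref{subsec:setting}. Moreover, $\mathfrak{a}_0$ is maximal abelian in $(\mathfrak{h}_0^a)^{-\theta }=\mathfrak{p}_0\cap \mathfrak{q}_0$. So $W_{K_H}(A)$ is the usual Weyl group $N_{K_H}(\mathfrak{a}_0)/Z_{K_H}(\mathfrak{a}_0)$ of the real reductive group $H^a$.

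If $\mathfrak{h}_0^a$ has a center, then $\mathfrak{c}(\mathfrak{h}_0^a)\cap \mathfrak{p}_0$ lies in $\mathfrak{a}_0$. Every element of $\Sigma $ vanishes on this central part, so every element of $W(\Sigma )$ acts trivially on it. The reflection group $W(\Sigma )$, viewed as acting on $\mathfrak{a}_0$, is therefore the same as the one acting on $\mathfrak{a}_0^s$, and it is consistent with the action of $N_{K_H}(A)$. We also use that $A\to \mathfrak{a}_0$ via $\exp $ is a diffeomorphism ($\mathfrak{a}_0\subset \mathfrak{p}_0$). Hence normalizing or centralizing $A$ is the same as normalizing or centralizing $\mathfrak{a}_0$ under $\operatorname{Ad}$, and the two quotient groups coincide.

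Finiteness then follows because $\Sigma $ is a finite root system, so its reflection group $W(\Sigma )$ is finite. Alternatively, one can use the fact recalled earlier that the Lie algebras of $N_{K_H}(A)$ and $Z_{K_H}(A)$ agree: the quotient is then discrete, and since $N_{K_H}(A)$ is a closed subgroup of the compact group $K_H$, it is compact, so the quotient is finite.

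No step poses a real obstacle. The only care needed is bookkeeping: the identification of $N_{K_H}(A)$ with $N_{K_H}(\mathfrak{a}_0)$, and of $Z_{K_H}(A)$ with $Z_{K_H}(\mathfrak{a}_0)$, under $\exp $.
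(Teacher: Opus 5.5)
Your route is the paper's route: compose the isomorphism $\widetilde{\Delta}\colon W_{K_H}(A)\to W_{K\times H}(A)$ of Theorem \ref{thm:weyl group} with the identification $W_{K_H}(A)=W(\Sigma)$ of Proposition \ref{prop:weylgroup-system}. The gap is in the paragraph where you check that Proposition \ref{prop:weylgroup-system} applies. Its decisive claim is that $N_{K_H}(A)$ acts on $\mathfrak{a}_0$ compatibly with $W(\Sigma)$, in particular trivially on $\mathfrak{c}(\mathfrak{h}_0^a)\cap\mathfrak{p}_0$. You assert this without proof, and it is false in the generality $(G^{\sigma})_0\subset H\subset G^{\sigma}$.

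The identification of the group-theoretic Weyl group with $W(\Sigma)$ (Knapp's Prop.\ 7.32) needs $\operatorname{Ad}(K_H)$ to act on $\mathfrak{h}_0^a$ by inner automorphisms of its complexification. That holds when $K_H$ is connected, equivalently when $H=K_H\exp(\mathfrak{p}_0\cap\mathfrak{h}_0)$ is connected, but not in general. For a counterexample, take:
\begin{itemize}
\item $G=SO_0(2,2)$ for the form $x_1^2+x_2^2-x_3^2-x_4^2$;
\item $\theta=\operatorname{Ad}(\operatorname{diag}(1,1,-1,-1))$ and $\sigma=\operatorname{Ad}(\operatorname{diag}(1,-1,-1,1))$;
\item $H=G^{\sigma}$.
\end{itemize}
Then $\mathfrak{k}_0\cap\mathfrak{h}_0=0$, and $\mathfrak{p}_0\cap\mathfrak{q}_0=\mathbb{R}(E_{13}+E_{31})+\mathbb{R}(E_{24}+E_{42})$ is abelian, where $E_{ij}$ are matrix units. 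Hence $\mathfrak{h}_0^a=\mathfrak{a}_0$, $\Sigma=\emptyset$ and $W(\Sigma)=\{1\}$. However, $k_0=\operatorname{diag}(-1,-1,1,1)$ lies in $K\cap G^{\sigma}=K_H$, and $\operatorname{Ad}(k_0)=-\mathrm{id}$ on $\mathfrak{a}_0$. So $[\Delta(k_0)]_{K\times H}$ is a nontrivial element, and in fact $W_{K\times H}(A)\simeq W_{K_H}(A)\simeq\mathbb{Z}/2$. The same $k_0$ gives $K\exp(X)H=K\exp(-X)H$, so here even $\overline{A}_+=A\to K\backslash G/H$ fails to be injective.

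This gap is inherited from the paper's two-line proof, which cites Proposition \ref{prop:weylgroup-system} with no connectedness hypothesis. Your argument becomes correct if you assume $H$ is connected. Then $K_H=\exp(\mathfrak{k}_0\cap\mathfrak{h}_0)\subset H^a$ acts on $\mathfrak{h}_0^a$ by inner automorphisms, hence trivially on $\mathfrak{c}(\mathfrak{h}_0^a)$. The classical result for the semisimple part $[\mathfrak{h}_0^a,\mathfrak{h}_0^a]$ then gives $W_{K_H}(A)=W(\Sigma)$. Without that assumption you only get $W(\Sigma)\simeq W_{(K_H)_0}(A)$ as a subgroup of $W_{K\times H}(A)$. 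The finiteness claim does survive in general, but only through your second argument: $N_{K_H}(A)$ is compact and has the same Lie algebra as $Z_{K_H}(A)$. Your first finiteness argument passes through $W(\Sigma)$ and inherits the gap.
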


\begin{proof}
The group isomorphism $W_{K\times H}(A)\simeq W_{K_H}(A)$ follows from Theorem \ref{thm:weyl group}. 
On the other hand, 
the Weyl group $W_{K_H}(A)$ associated to $G/H$ is isomorphic to $W(\Sigma )$ as a group 
(see Proposition \ref{prop:weylgroup-system}). 
Hence, this corollary has been verified. 
\end{proof}

\begin{remark}
It is well-known that 
the Lie algebra of $N_{K_H}(A)$ coincides with the normalizer 
$N_{\mathfrak{k}_0\cap \mathfrak{h}_0}(\mathfrak{a}_0)$ of $\mathfrak{a}_0$ 
in $\mathfrak{k}_0\cap \mathfrak{h}_0$ 
and coincides with the Lie algebra $Z_{\mathfrak{k}_0\cap \mathfrak{h}_0}(\mathfrak{a}_0)$ of $Z_{K_H}(A)$. 
Then, the finiteness of $W_{K_H}(A)$ follows from this fact. 
In contrast, 
it does not seem easy to understand whether 
the Lie algebra $\operatorname{Lie}(N_{K\times H}(A))$ of $N_{K\times H}(A)$ 
coincides with the Lie algebra of $Z_{K\times H}(A)$. 
In the case of semisimple symmetric spaces, 
Theorem \ref{thm:normalizer} proves 
that $\operatorname{Lie}(N_{K\times H}(A))$ equals $N_{\mathfrak{k}_0\cap \mathfrak{h}_0}(\mathfrak{a}_0)$. 
\end{remark}

\subsection{Answer to Problem \ref{problem:injective} for semisimple symmetric space}
\label{subsec:answer}

Finally, 
we give an answer to Problem \ref{problem:injective} for semisimple symmetric spaces 
by comparing the known facts with the new result. 

\begin{corollary}
\label{cor:problem}
For a semisimple symmetric space $G/H$, 
we have a bijection $K\backslash G/H\simeq W_{K\times H}(A)\backslash A$. 
\end{corollary}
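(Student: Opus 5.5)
The plan is to show that the surjective map (\ref{eq:w-surjection}) is injective, combining Theorem \ref{thm:weyl group} with the chain of bijections (\ref{eq:summary}).

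First, I would check that the action of $W_{K\times H}(A)$ on $A$ is the restriction of the $(K\times H)$-action, so each class $[k,h]_{K\times H}$ acts by $a\mapsto kah^{-1}$. Under the isomorphism $\widetilde{\Delta }$ of Theorem \ref{thm:weyl group}, every class is represented by some $(\ell ,\ell )$ with $\ell \in N_{K_H}(A)$. Hence the $W_{K\times H}(A)$-orbit of $a\in A$ is $\{ \ell a\ell ^{-1}:\ell \in N_{K_H}(A)\} $, which is exactly the $W_{K_H}(A)$-orbit of $a$. This gives a canonical identification $W_{K\times H}(A)\backslash A=W_{K_H}(A)\backslash A$ as quotients of $A$.

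Next, for injectivity, take $a,b\in A$ with $KaH=KbH$. By Proposition \ref{prop:weyl group action}, choose $\ell _1,\ell _2\in N_{K_H}(A)$ with $a_+:=\ell _1\cdot a$ and $b_+:=\ell _2\cdot b$ in $\overline{A}_+$. Since $\ell _i\in K_H\subset K\cap H$, we have $Ka_+H=KaH=KbH=Kb_+H$, exactly as in the proof of Theorem \ref{thm:normalizer}. Theorem \ref{thm:cartan decomposition g/h}(2) then forces $a_+=b_+$. Thus $b=\ell _2^{-1}\ell _1\cdot a$, so $a$ and $b$ lie in the same $W_{K_H}(A)$-orbit, and hence by the first step in the same $W_{K\times H}(A)$-orbit. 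Together with the surjectivity of (\ref{eq:w-surjection}), which follows from (\ref{eq:cd}), this gives the bijection.

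The only delicate point is making sure the orbit identification in the first step is legitimate, namely that the action of $W_{K\times H}(A)$ on $A$ is well defined modulo $Z_{K\times H}(A)$. This is immediate from the definitions. Everything else is a direct use of Theorem \ref{thm:cartan decomposition g/h} and Proposition \ref{prop:weyl group action}.
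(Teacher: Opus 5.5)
Your proof is correct and takes the same approach as the paper. The paper likewise uses Theorem~\ref{thm:weyl group} to identify the $W_{K\times H}(A)$- and $W_{K_H}(A)$-orbits on $A$ and then appeals to the chain (\ref{eq:summary}); you additionally spell out why that chain is compatible with the map (\ref{eq:w-surjection}), via $Ka_+H=KaH$ and Theorem~\ref{thm:cartan decomposition g/h}(2). Note also that your injectivity step only needs $(\ell_2^{-1}\ell_1,\ell_2^{-1}\ell_1)\in N_{K\times H}(A)$, i.e.\ Lemma~\ref{lem:inclusion}, so the full strength of Theorem~\ref{thm:weyl group} is not actually required for this corollary.
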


\begin{proof}
In view of the relations (\ref{eq:summary}), 
it is sufficient for the proof to verify the bijection 
between two orbit spaces $W_{K\times H}(A)\backslash A$ and $W_{K_H}(A)\backslash A$. 

We recall that $W_{K_H}(A)$ acts on $A$ as follows 
\begin{align*}
[\ell ]_{K_H}\cdot a=\ell \cdot a=\Delta (\ell )\cdot a\quad 
(\ell \in N_{K_H}(A),a\in A). 
\end{align*}
On the other hand, 
it follows from Theorem \ref{thm:weyl group} that an arbitrary element of $W_{K\times H}(A)$ 
is of the form $\widetilde{\Delta }([\ell ]_{K_H})=[\Delta (\ell )]_{K\times H}$ 
for some $\ell \in N_{K_H}(A)$. 
Then, the $W_{K\times H}(A)$-action on $A$ is written as 
\begin{align*}
[\Delta (\ell )]_{K\times H}\cdot a=\Delta (\ell )\cdot a\quad 
(\ell \in N_{K_H}(A),a\in A). 
\end{align*}
Thus, $[\Delta (\ell )]_{K\times H}\cdot a=[\ell ]_{K_H}\cdot a$ is obtained 
for any $\ell \in N_{K_H}(A)$ and any $a\in A$, 
and hence $W_{K\times H}(A)\cdot a=W_{K_H}\cdot a$ for any $a\in A$. 
Therefore, $W_{K_H}(A)\backslash A$ is in bijection with $W_{K\times H}(A)\backslash A$. 
\end{proof}

\begin{remark}
When $G$ is a reductive Lie group such that its semisimple part is compact 
(in particular, $G$ itself is compact) 
and both $(G,K)$ and $(G,H)$ are compact symmetric pairs, 
Hoogenboom \cite{ho83} and Matsuki \cite[Thm.\,1]{matsuki97} show that 
there is a decomposition $G=KAH$ for some torus $A$ and 
Matsuki \cite[Thm.\,1]{matsuki97} (see also \cite{matsuki02}) proves 
the bijection $K\backslash G/H\simeq W_{K\times H}(A)\backslash A$ 
for the similar quotient group $W_{K\times H}(A)$ as in Definition \ref{def:weyl group} 
in this compact setting 
(he uses the notation $J$ instead of $W_{K\times H}(A)$). 
The idea of the proof of this bijection discussed in \cite[Thm.\,1]{matsuki97} can be applied 
to the proof of Corollary \ref{cor:problem} in our non-compact setting. 
\end{remark}

The study of Problem \ref{problem:injective} is going to the next stage 
for reductive real spherical homogeneous spaces 
which are a wider class of semisimple symmetric spaces and contain non-symmetric ones. 
Indeed, 
reductive real spherical homogeneous spaces satisfy the decomposition (\ref{eq:cd}) 
as a generalization of a Cartan decomposition (cf. \cite{sa10,sa15,sa25} and \cite{ta22}). 
Recently, 
we explicitly determine the group structure of the Weyl group $W_{K\times H}(A)$ 
for a class of reductive real spherical homogeneous spaces 
which are realized as one-dimensional fiber bundles over semisimple symmetric spaces, 
which will be discussed in a forthcoming paper \cite{sa-pre}. 




\begin{thebibliography}{99}

\bibitem{bis21}
K. Baba, O. Ikawa and A. Sasaki, 
A duality between non-compact semisimple symmetric pairs and commutative compact semisimple symmetric triads 
and its general theory, 
{\it Differential Geom. Appl.} {\bf 76} (2021), Paper No. 101751, 42 pp. 

\bibitem{be57}
M. Berger, 
Les espaces symm\'etriques noncompacts, 
{\it Ann. Sci. \'Ecole Norm. Sup. } {\bf 74} (1957), 85--177. 

\bibitem{fj78}
M. Flensted-Jensen, 
Spherical functions of a real semisimple Lie group. A method of reduction to the complex case, 
{\it J. Functional Analysis} {\bf 30} (1978), 106--146. 


\bibitem{ho83}
B. Hoogenboom, 
Intertwining functions on compact Lie groups, 
Ph.D. thesis, Mathematisch Centrum, Amsterdam, 1983. 

\bibitem{helgason}
S. Helgason, 
{\it Differential geometry, Lie groups, and symmetric spaces}, Graduate Studies in Mathematics, 34, 
American Mathematical Society, Providence, RI, 2001. 

\bibitem{knapp}
A. W. Knapp, 
{\it Lie groups beyond an introduction}, 
Progr. Math., {\bf 140}, Birkh\"auser Boston, Inc., Boston, MA, 2002. 


\bibitem{loos}
O. Loos, 
Symmetric spaces. I: General theory, 
W. A. Benjamin, Inc., New York-Amsterdam 1969. 

\bibitem{matsuki97}
T. Matsuki, 
Double coset decompositions of reductive Lie groups arising from two involutions, 
{\it J. Algebra} {\bf 197} (1997), 49--91. 

\bibitem{matsuki02}
T. Matsuki, 
Classification of two involutions on compact semisimple Lie groups and root systems, 
{\it J. Lie Theory} {\bf 12} (2002), 41--68. 

\bibitem{mostow}
G. D. Mostow, 
Some new decomposition theorems for semisimple Lie groups, 
{\it Mem. Amer. Math. Soc.} {\bf 14} (1955), 31--54. 

\bibitem{ro79}
W. Rossmann, 
The structure of semisimple symmetric spaces, 
{\it Canad. J. Math.} {\bf 31} (1979), 157--180. 

\bibitem{sa10}
A. Sasaki, 
A characterization of non-tube type Hermitian symmetric spaces by visible actions, 
{\it Geom. Dedicata} {\bf 145} (2010), 151--158. 

\bibitem{sa15}
A. Sasaki, 
Admissible representations, multiplicity-free representations and visible actions on
non-tube type Hermitian symmetric spaces, 
{\it Proc. Japan Acad. Ser. A Math Sci.} {\bf 91} (2015), 70--75. 

\bibitem{sa25}
A. Sasaki, 
Invariant measures on non-symmetric reductive real spherical homogeneous spaces of rank-one type, 
{\it Symmetry in Geometry and Analysis, Vol. 1, Festscrhift for Toshiyuki Kobayashi}, 
497--531, Progr. Math. {\bf 357}, Birkh\"auser/Springer, Singapore, 2025. 

\bibitem{sa-pre}
A. Sasaki, 
Weyl group for reductive real spherical homogeneous spaces of one-dimensional fiber bundle type, 
{\it in preparation}. 

\bibitem{sc84}
H. Schlichtkrull, 
Hyperfunctions and harmonic analysis on symmetric spaces, 
Progr. Math. {\bf 49}, Birkh\"auser Boston, Inc., Boston, MA, 1984. 

\bibitem{ta22}
Y. Tanaka, 
A Cartan decomposition for a reductive real spherical homogeneous spaces, 
{\it Kyoto J. Math.} {\bf 62} (2022), 95--102. 

\end{thebibliography}
\end{document}